\newtheorem{theorem}{Theorem}
\newtheorem{lemma}{Lemma}
\newtheorem{prop}{Proposition}
\newtheorem{coroll}{Corollary}
\theoremstyle{definition}
\newtheorem{remark}{Remark}
\newtheorem{example}{Example}
\newtheorem{bijection}{Bijection}
\newtheorem{definition}{Definition}
\newtheorem{notation}{Notation}
\DeclareMathOperator{\chara}{char}
\DeclareMathOperator{\val}{val}
\DeclareMathOperator{\pos}{pos}
\DeclareMathOperator{\grad}{deg}
\newcommand{\lAngle}{\ensuremath{\langle\hspace{-0.1cm}\langle}}
\newcommand{\rAngle}{\ensuremath{\rangle\hspace{-0.1cm}\rangle}}
\newcommand{\lbrakk}{\ensuremath{[\hspace{-0.15cm}[}}
\newcommand{\rbrakk}{\ensuremath{]\hspace{-0.15cm}]}}
\newcommand{\Seq}{\textsc{SEQ}}
\newcommand{\N}{\ensuremath{\mathbb{N}}}
\newcommand\cB{\mathcal B}
\newcommand{\refF}[1]{Figure~\ref{#1}}
\newcommand\JCTA{\emph{J. Combin. Theory Ser. A} }
\def\nobibitem#1\par{}
\begin{document}

\title{On Path diagrams and Stirling permutations}
\date{}

\author[M.~Kuba]{Markus Kuba}
\address{Markus Kuba\\
Department of Applied Mathematics and Physics\\
University of Applied Sciences - Technikum Wien\\
H\"ochst\"adtplatz 5, 1200 Wien} %
\email{kuba@technikum-wien.at}

\author[A.~L.~Varvak]{Anna L.~Varvak}
\address{Anna L.~Varvak\\
Soka University of America\\
Pauling Hall 428\\
1 University Drive\\
Aliso Viejo, CA 92656, USA} %
\email{avarvak@soka.edu}

\maketitle

\begin{abstract}
A permutation can be locally classified according to the four local types: peaks, valleys, double rises and double falls. 
The corresponding classification of binary increasing trees uses four different types of nodes. 
Flajolet demonstrated the continued fraction representation of the generating function of local types, using a classical bijection between permutations, binary increasing trees, and suitably defined path diagrams induced by Motzkin paths.

The aim of this article is to extend the notion of local types from permutations to $k$-Stirling permutations (also known as $k$-multipermutations).
We establish a bijection of these local types to node types of $(k+1)$-ary increasing trees. We present a branched continued fraction representation of the generating function of these local types through a bijection with path diagrams induced by \L ukasiewicz paths, generalizing the results from permutations to arbitrary $k$-Stirling permutations.

We further show that the generating function of ordinary Stirling permutation has at least three branched continued fraction representations, using correspondences between non-standard increasing trees, $k$-Stirling permutations and path diagrams.
\end{abstract}

\emph{Keywords:} \L ukasiewicz paths, continued fractions, path diagrams, Stirling permutations, multipermutations, increasing trees, local types, formal power series\\
\indent\emph{2000 Mathematics Subject Classification} 05C05.

\section{Introduction\label{STIRPsec1}}
Any ordinary permutation $\sigma=\sigma_1\dots\sigma_n$ of size $n$ can be locally be classified according to four local types called peaks (maxima), valleys (minima), double rises and double falls, depending on the relative order of $\sigma_j$ to its neighbor; see Flajolet~\cite{Flajo1980}, or Conrad and Flajolet~\cite{Conrad2006} and the references therein. By the classical bijection between ordinary permutations and binary increasing trees~\cite{Flajo1980,BerFlaSal1992}, these local types correspond with node types of binary increasing trees~\cite{Flajo1980,Conrad2006}. Through a bijection between permutations and path diagrams induced by Motzkin paths, Flajolet~\cite{Flajo1980} obtained a continued fraction representation of the ordinary generating function of permutations $P(z;u,v,w)$ with respect to the local types (see also Fran\c{c}on and Viennot~\cite{Francon1979}), leading to interesting continued fraction representations concerning Euler numbers, tangent numbers, etc., can be obtained by specific evaluations of the variables $u$, $v$ and $w$.

The main aim of this article is to extend the notion of local types to a generalized version of permutations called the $k$-Stirling permutations, establish a relation of these local types with node types of $(k+1)$-ary increasing trees, and to obtain a branched continued fraction representation of the generating function of these local types through a bijection with suitably defined path diagrams induced by \L ukasiewicz paths, extending the following theorem: 

\begin{theorem}[Flajolet - Continued Fraction for Permutations]
Let $P_{k,\ell,m}$ be the number of permutations having $k$ minima (hence
$k + 1$ maxima), $\ell$ double rises and $m$ double falls. The ordinary generating function
$P(z;u,v,w)=\sum P_{k,\ell,m}u^k v^{\ell}  w^{m} z^{2k+1+\ell+m+1}$ 
has the expression 
\[
P(z;u,v,w)=\frac{1}{1-1(v+w)z-\frac{1\cdot 2uz^2}{1-2(v+w)z-\frac{2\cdot 3 uz^2}{\dots}}}.
\]
\end{theorem}

We note that related very general results on branched continued fraction expansions have been recently given by P\'etr\'eolle, Sokal and Zhu~\cite{PSZ2018}.

\smallskip

The family of Stirling permutations $\mathcal{Q}$ were introduced by Gessel and Stanley
\cite{GessStan1978} in relation to the Stirling numbers of first and second kind and their connection to properties of Eulerian numbers. A Stirling permutation $\sigma=\sigma_1\dots\sigma_{2n}\in\mathcal{Q}_n$ is a permutation of
the multiset $\{1, 1, 2, 2, \dots , n, n\}$ such that for each
$i$, $1\le i \le n$, the elements between the two
occurrences of $i$ are greater than or equal to $i$.  Recently, this class of combinatorial objects have generated some interest:
B\'ona~\cite{Bona2007} studied the distribution of
descents in Stirling permutation, and Janson~\cite{Jan2008} showed the connection between Stirling permutations and plane-oriented recursive trees, and proved a joint normal limit law for the parameters considered by B\'ona.

\smallskip

A natural generalization of Stirling permutations is to consider the family $\mathcal{Q}(k)$ of
permutations of a more general multiset
$\{1^{k}, 2^{k}, \dots, n^{k}\}$, with $k\in \{1,2,\ldots\}$, with the restriction that
for each $i$, $1\le i \le n$, the elements between two consecutive occurrences of
$i$ are greater than $i$, which we call $k$-Stirling permutations. Such generalized Stirling permutations have already previously been considered by Brenti~\cite{Brenti1989, Brenti1998}, and also by Park~\cite{Park1994b,Park1994a,Park1994c} under the name of $k$-multipermutations. The case $k=1$ corresponds to ordinary permutations $\mathcal{Q}_n(1)=S_n$, and the case $k=2$ corresponds to Stirling permutations $\mathcal{Q}=\mathcal{Q}(2)$.
Recently, Janson et al.~\cite{JanKuPa2008} studied several parameters in $k$-Stirling permutations, related to the studies~\cite{Bona2007,Jan2008}, extending the results of~\cite{Bona2007,Jan2008} concerning the distribution of descents and related statistics.
An important result of~\cite{JanKuPa2008} is the natural bijection between $k$-Stirling permutations and $(k+1)$-ary increasing trees, which was already known to Gessel (see Park~\cite{Park1994b}).

\smallskip

This article is organized as follows.
In section~\ref{SEC IncTrStirPerm}, we define $k$-Stirling permutations and increasing trees, and give a bijection between $k$-Stirling permutations and $(k+1)$-ary increasing trees.

In section~\ref{SEC localtypes}, we extend the notion of local types to $k$-Stirling permutations, and establish a bijection between these local types and node types of $(k+1)$-ary increasing trees.

In section~\ref{SEC PathLocTypes}, we define path diagrams induced by \L ukasiewicz paths, and establish a bijection between the path diagrams with appropriate possibility function and restrictions on the plane paths and $k$-Stirling permutations.  We then give a branched continued fraction representation of the ordinary generating function of the $k$-Stirling permutations with respect to the local types.

Finally, in section~\ref{SEC Stirling}, we first go back to the classical Stirling permutations, presenting an alternative branched continued fraction representation using unrestricted \L ukasiewicz paths and the correspondence between Stirling permutations and plane-oriented recursive trees~\cite{Jan2008}.  We further provide statistics in Stirling permutations and in ternary increasing trees which are combinatorially equivalent to the statistics of the number of nodes of outdegree $j$ in a plane-oriented recursive tree of size $n$.
Then, we present two general path diagram representations for $k$-Stirling permutations using path diagrams with unrestricted \L ukasiewicz paths and families of non-standard increasing trees, allowing to construct two additional branched continued fraction expansions for the generating function of Stirling permutations.

\subsection{Acknowledgments}
The authors thank Ira Gessel for connecting them in the beginning of this work. A great many thanks to Alan P. Sokal for his interest in this work and his encouraging remarks, sparking new life into this project. 

\smallskip

The authors thank the referee for the constructive and very helpful remarks, improving the presentation of this work.

\smallskip

During the early stages of this work the first author was supported by the Austrian Science Foundation FWF, grant S9608.

\section{Increasing trees and generalized Stirling permutations}\label{SEC IncTrStirPerm}

\subsection{Generalized Stirling permutations}

\begin{definition}\label{k-stir}
Let $\{1^k, 2^k, \ldots, n^k\}$ denote a multiset where each element in $\{1,2,\ldots,n\}$ occurs $k$ times.
A \emph{$k$-Stirling permutation  of size $n$}, denoted as $\tau\in\mathcal{Q}_n(k)$,  is a permutation $\tau_1 \tau_2 \ldots \tau_{kn}$ of elements in the multiset $\{1^k, 2^k, \ldots, n^k\}$  with the condition that for $i<j<k$, if $\tau_i=\tau_k$, then $\tau_i\le \tau_j \ge \tau_k$.
\end{definition}
For example, $224442113331$ is a $3$-Stirling permutation of size $3$.

Let $Q_n(k)=|\mathcal{Q}_n(k)|$ denote the number of $k$-Stirling permutations of size $n$. Then

\begin{equation}\label{qn}
Q_n(k)
= \prod_{i=0}^{n-1}(ki+1)
=k^{n-1}\frac{\Gamma(n+1/k)}{\Gamma(1/k)},
\end{equation}

\noindent
by induction on  $n$: a $k$-Stirling permutation of size $n$ can be obtained from a $k$-Stirling permutation of size $(n-1)$ by inserting
the $k$ copies of $n$ as a substring into any of the $k(n-1)+1$ positions between the existing elements, including the first and the last position; see for example~\cite{Park1994b,JanKuPa2008}.

For example, in the case $k=3$, there is only one $3$-Stirling permutation of size $1$, given by $111$ ; there are four $3$-Stirling permutations of size $2$, given by $111222$, $112221$, $122211$, $222111$; etc.  And in particular, the number of $2$-Stirling permutations is $Q_n(2)=(2n-1)!!$.

The correspondence between ordinary permutations and binary trees has been useful in uncovering the internal structure of permutations, and giving alternative combinatorial models to quantities that relate to the local types of permutations, like the Eulerian numbers that count the number of permutations with a certain number of descents.  Likewise, the correspondence between $2$-Stirling permutations and plane-oriented recursive trees proved useful in studying the distribution of descents in Stirling permutations~\cite{Jan2008}.  In the next section, we establish the correspondence for the general case of $k$-Stirling permutations and $(k+1)$-ary increasing trees.

\subsection{Families of increasing trees}

We introduce a general family of increasing trees based on earlier considerations of
Bergeron et al.~\cite{BerFlaSal1992} and Panholzer and Prodinger~\cite{PanPro2005+}, of which $(k+1)$-ary increasing trees and plane-oriented recursive trees are special cases.
These tree families and their combinatorial description are quite well known;
we collect some relevant results of~\cite{BerFlaSal1992,PanPro2005+,JanKuPa2008} for the reader's convenience.

Informally, an increasing tree of size $n$ is a rooted ordered tree with $n$ nodes, where
the nodes are labeled by distinct integers of the set
$\{1, \dots, n\}$ such that each sequence of labels along any path starting at the root is increasing. These are the simple families of increasing trees introduced in~\cite{BerFlaSal1992}; the underlying unlabeled tree model
is the so-called simply generated tree~\cite{MeiMoo1978}.

Formally, a class $\mathcal{T}$ of a simple family of increasing trees can be defined
in the following way. A sequence of non-negative numbers $(\varphi_{\ell})_{\ell \ge 0}$
with $\varphi_{0} > 0$, called the \emph{degree-weight sequence},
is used to define the weight $w(T)$ of any ordered tree $T$ by
$w(T) := \prod_{v} \varphi_{\grad^{+}(v)}$, where $v$ ranges over all vertices of $T$, and $\grad^{+}(v)$ is the
out-degree of the vertex $v$. Let $\mathcal{L}(T)$ denote the set of increasing labelings of the  of the ordered tree $T$ with distinct integers $\{1, 2, \dots, |T|\}$,
where $|T|$ denotes the size of the tree $T$.
Then the simple family of increasing trees $\mathcal{T}$ consists of all ordered trees $T$
together with their weights $w(T)$
and the increasing labeling $\lambda \in\mathcal{L}(T)$. 
The simple family of increasing trees $\mathcal{T}$ associated with a degree-weight generating function
$\varphi(t) := \sum_{\ell \ge 0} \varphi_{\ell} t^{\ell}$ can be described
by the formal recursive equation
\begin{equation}
   \label{eqnz0}
   \mathcal{T} = \bigcirc\hspace*{-0.75em}\text{\small{$1$}}\hspace*{0.4em}
   \times \Big(\varphi_{0} \cdot \{\epsilon\} \; \dot{\cup} \;
   \varphi_{1} \cdot \mathcal{T} \; \dot{\cup} \; \varphi_{2} \cdot
   \mathcal{T} \ast \mathcal{T} \; \dot{\cup} \; \varphi_{3} \cdot
   \mathcal{T} \ast \mathcal{T} \ast \mathcal{T} \; \dot{\cup} \; \cdots \Big)
   = \bigcirc\hspace*{-0.75em}\text{\small{$1$}}\hspace*{0.3em} \times \varphi(\mathcal{T}),
\end{equation}
where $\bigcirc\hspace*{-0.75em}\text{\small{$1$}}\hspace*{0.4em}$ denotes the node
labeled by $1$, $\times$ the cartesian product, $\dot{\cup}$ the disjoint union, $\ast$ the partition product for labeled
objects; $\varphi(\mathcal{T})$ the vertices substituted structure (e.~g., see~\cite{VitFla1990,FlaSed2008}). For a given degree-weight sequence, we define the total weight of size $n$ increasing trees
by $T_{n} := \sum_{|T|=n} w(T) \cdot L(T)$, where $L(T) :=\big|\mathcal{L}(T)\big|$ is the  number of distinct increased labelings on the ordered tree $T$. It follows from the recursive structure of increasing trees that the exponential generating function
$T(z) := \sum_{n \ge 1} T_{n} \frac{z^{n}}{n!}$
satisfies the autonomous first order differential equation
\begin{equation}
   \label{eqnz1}
   T'(z) = \varphi\big(T(z)\big), \quad T(0)=0.
\end{equation}
We obtain the families of $(k+1)$-ary increasing trees and plane-oriented recursive trees by choosing appropriate degree-weight sequences $(\varphi_{\ell})_{\ell \ge 0}$.

\smallskip

\begin{example}
The family $\mathcal{T}=\mathcal{T}(k+1)$ of $(k+1)$-ary increasing trees, with integer $k\in \N$, is the family of increasing trees
where each node has $k+1$ (labeled) positions for children, going from left to right. The vacant positions are usually denoted by external nodes (see Figure~\ref{STIRPfig0} for an illustration of ternary increasing trees).
Each node can have $0\le \ell \le k+1$ internal child nodes, with $\binom {k+1}{\ell}$ different ways to
attach them (see Figure~\ref{TYPESfig1}).  Thus the appropriate degree-weight sequence to specify the weight of the nodes is
$\varphi_{\ell}=\binom{k+1}{\ell}$ for $0\le \ell\le k+1$, $\varphi_{\ell}=0$ for $k+1<\ell$.
Consequently, the degree weight generating function is $ \varphi(t)=\sum_{\ell\ge 0}\varphi_{\ell} t^{\ell} =(1+t)^{k+1}$, which lets us derive the exponential generating function $T(z)=T(z,k+1)$  of $(k+1)$-ary increasing trees
by solving the corresponding differential equation~\eqref{eqnz1}. The number $T_n=T_n(k+1)=|\mathcal{T}_n(k+1)|$ of $(k+1)$-ary increasing trees of size $n$ can be obtained from the generating function, or by induction on  $n$:

\begin{equation}
\label{eqnkary}
T(z)=\frac{1}{(1-kz)^{\frac1k}}-1,
\qquad T_{n}=\prod_{\ell=1}^{n}(k(\ell-1)+1)=k^{n-1}\frac{\Gamma(n+1/k)}{\Gamma(1/k)},
\quad
n\ge 1.
\end{equation}
The case $k=1$ is the family of binary increasing trees, and the case $k=2$ is the family of ternary increasing trees. Note that $T_n=Q_n$, the number of $k$-Stirling permutations of size $n$ (see equation~\eqref{qn}).
\end{example}

\begin{figure}[!htb]
\centering
\includegraphics[angle=0,scale=1]{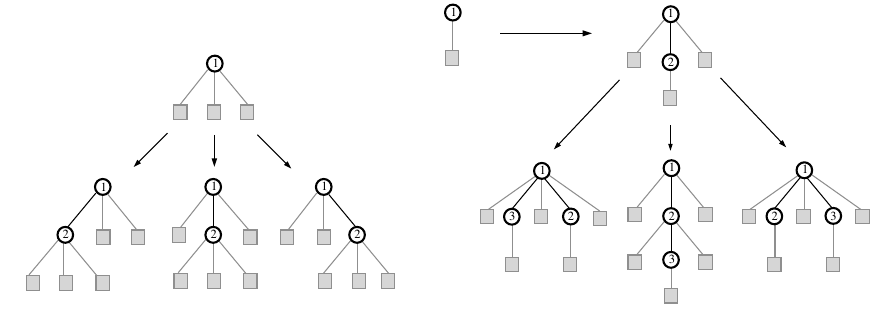}
\caption{Ternary increasing trees of size one and two and plane-oriented increasing trees of size one, two and three, respectively. The positions where new nodes can be attached are denoted by external nodes.\label{STIRPfig0}}
\end{figure}

\begin{example}
The family $\mathcal{P}$ of plane-oriented recursive trees consists of rooted
ordered increasing trees with no restriction on the out-degrees of the nodes. A new vertex may be joined
to an existing vertex $v$ in exactly $\deg^{+}(v)+1$ positions, where $\deg^{+}(v)$
denotes the out-degree of node $v$. These $\deg^{+}(v)+1$ positions can be represented by external nodes (see Figure~\ref{STIRPfig0}).
Consequently, the total number of positions available for the $(n+1)$-st node being attached to a tree of size
$n$ is given by $\sum_{j=1}^{n}(\deg^{+}(j)+1)=2n-1$, independent of
the actual shape of the tree. There is exactly one tree
of size $1$, and  for $n\ge 1$, there are
$T_n=\prod_{\ell=1}^{n-1}(2\ell-1)=(2n-3)!!$ distinct
plane-oriented recursive trees of size $n$. From the formal description of increasing trees, since there are no restrictions on node out-degrees, we set the degree-weights to $\varphi_{\ell}=1$ for $\ell \ge 0$, so the
degree-weight generating function is given by $\varphi(t)=\sum_{\ell\ge 0}\varphi_{\ell}t^{\ell}
=\frac{1}{1-t}$. By solving the differential
equation~\eqref{eqnz1}, we get the exponential generating function $T(z)$ of plane-oriented recursive trees:
   \begin{equation*}
      T(z) = 1-\sqrt{1-2z}, \quad \text{and} \quad
      T_{n} =\prod_{\ell=1}^{n-1}(2\ell-1) = (2n-3)!!, \enspace \text{for} \; n \ge 1.
   \end{equation*}
Note that $T_{n+1}=(2n-1)!!$, equal to the number of $2$-Stirling permutations $\mathcal{Q}_n(2)$ of size $n$ and the number of ternary trees of size $n$.
\end{example}

\begin{remark}
Both the family $\mathcal{T}(k+1)$ of $(k+1)$-ary increasing trees and the family $\mathcal{P}$ of plane-oriented recursive trees can be generated according to tree evolution processes, as represented in Figure~\ref{STIRPfig0}. For a comprehensive discussion, we refer the interested reader to the work of Panholzer and Prodinger~\cite{PanPro2005+}.
\end{remark}

\subsection{Bijection of \texorpdfstring{$k$}{k}-Stirling permutations and \texorpdfstring{$(k+1)$}{(k+1)}-ary increasing trees}

Janson~\cite{Jan2008} has shown that $\mathcal{P}_{n+1} $ plane-oriented recursive trees of size $n+1$ are in bijection with $2$-Stirling permutations of size $n$, and Janson et al.~\cite{JanKuPa2008} gave a bijection between ternary increasing trees of size $n$ and plane-oriented recursive trees of size $n+1$.  More generally:

\begin{theorem}[Gessel~\cite{Park1994b}; see also~\cite{JanKuPa2008}]
\label{STIRPprop1T}
For $k\in\N$, the family of $(k+1)$-ary
increasing trees of size $n$ is in a natural bijection
with $k$-Stirling permutations of size $n$: $\mathcal{Q}_n(k)\cong \mathcal{T}_n(k+1)$.
\end{theorem}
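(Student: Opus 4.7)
My plan is to construct the bijection recursively via a natural traversal of the tree. Given a $(k+1)$-ary increasing tree $T$ of size $n$ whose root carries label $i$ and has ordered subtrees $T_{0}, T_{1}, \dots, T_{k}$ (some possibly external/empty), define
\[
\Phi(T) := \Phi(T_{0}) \, i \, \Phi(T_{1}) \, i \, \Phi(T_{2}) \, \cdots \, i \, \Phi(T_{k}),
\]
with $\Phi(\emptyset) = \varepsilon$ the empty word. The $k$ copies of $i$ are placed in the $k$ interior ``gaps'' between the $k+1$ subtree-words, so $\Phi(T)$ has length $kn$ and uses each label of $T$ exactly $k$ times.

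First I would check that $\Phi(T) \in \mathcal{Q}_n(k)$. Since in an increasing tree the labels strictly increase along every root-to-leaf path, the root label $i$ is the minimum of all labels in the subtree rooted at it. Hence every label of $T_{1}, \dots, T_{k-1}$ (which appear strictly between the first and last occurrence of $i$ in $\Phi(T)$) is $>i$, while interior copies of $i$ satisfy the condition trivially. Applying this observation recursively inside each $T_{r}$ verifies the Stirling condition for every label simultaneously.

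Next I would construct the inverse. Given $\sigma \in \mathcal{Q}_n(k)$, locate the $k$ occurrences of the smallest label appearing in $\sigma$, call it $i$. These copies split $\sigma$ into $k+1$ (possibly empty) factors $\sigma_{0}, \sigma_{1}, \dots, \sigma_{k}$. The crucial ``no-straddling'' observation is that for any other label $j>i$ the $k$ copies of $j$ must all lie inside a common factor $\sigma_{r}$: otherwise some copy of $i$ would sit strictly between two copies of $j$, violating the Stirling condition for $j$ because $i<j$. Consequently the multisets of labels used by $\sigma_{0}, \dots, \sigma_{k}$ partition $\{\text{labels}\ne i\}$, and each $\sigma_{r}$ is itself a $k$-Stirling permutation (on its multiset of labels) of strictly smaller size. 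By induction on $n$, each $\sigma_{r}$ corresponds to a unique $(k+1)$-ary increasing tree $T_{r}$; attaching these as the ordered children of a new root labeled $i$ yields a valid $(k+1)$-ary increasing tree, since $i$ is smaller than every label in $T_{0}\cup\dots\cup T_{k}$. This construction is clearly inverse to $\Phi$.

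The main obstacle is the no-straddling observation for copies of larger labels; without it the recursion on the $k+1$ factors would not be well-defined, since a label could in principle be distributed across several factors. Once this observation is established the rest is a routine double induction on tree size and word length, and the cardinality identity $T_{n}=Q_{n}$ recorded in the preceding example serves as a consistency check rather than being needed for the argument.
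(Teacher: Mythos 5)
Your proposal is correct and is essentially the paper's own argument: your recursive map $\Phi$ is exactly the depth-first (contour) coding of Bijection~\ref{TYPESbij}, and your inverse is the same decomposition $\sigma=\sigma_{0}\,i\,\sigma_{1}\,i\cdots i\,\sigma_{k}$ at the smallest label with the subtrees built recursively. The only cosmetic difference is that you justify well-definedness of the inverse via the explicit ``no-straddling'' lemma, whereas the paper argues bijectivity by the induction matching external nodes with the $kn+1$ gaps under insertion of $(n+1)^k$; both fill the same logical role.
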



As shown in~\cite{JanKuPa2008}, the bijection behind Theorem~\ref{STIRPprop1T} allows to study parameters
in $k$-Stirling permutations via the corresponding parameters in $(k+1)$-ary increasing trees.
The bijection is stated explicitly below.

\begin{bijection}
\label{TYPESbij}
Starting with a $(k+1)$-ary increasing tree $T$ of size $n$, we construct a $k$-Stirling permutation of size $n$ as follows.
We consider the representation of the tree $T$ where each labeled node has exactly $k+1$ ordered children, with unlabeled children marked by external nodes.  For each node labeled by an integer, we place $k$ copies of that integer between the $k+1$ children of the node.  We then collect these copies into a string through a contour walk around the tree starting at the root and going left.  (Equivalently, we construct the string by performing a depth-first walk, where we concatenate the integer $v$ to the string every time we visit the node labeled by $v$, except for the first and the $(k+1)$-st visit.)  This process results in a unique string of $k\cdot n$ integers $\tau=\tau_1 \tau_2 \ldots \tau_{kn}$, where each of the integers $1,\dots, n$ appears exactly $k$ times.  Since the tree $T$ is increasing, it guarantees that $\tau_i \le \tau_j \ge \tau_k$ whenever $\tau_i = \tau_k$ and $i<j<k$.  Therefore,  $\tau$ is a $k$-Stirling permutation of size $n$.

Note that through this process, the external nodes of the tree $T$ correspond to the gaps in the string $\tau$; adding the $(n+1)$-st labeled node to $T$ at one of its $kn+1$ external nodes corresponds to
inserting the $k$-tuple $(n+1)^k$
into the string $\tau$ at one of its $kn + 1$ gaps.

Conversely, starting with a $k$-Stirling permutation $\sigma$ of size $n$, we construct
the unique corresponding $(k+1)$-ary increasing tree through the following recursive procedure.
First, we decompose the permutation as $\sigma=\sigma_{1}1\sigma_2 1\dots\sigma_{k} 1 \sigma_{k+1}$.
The $\sigma_i$'s are either empty strings, or (with proper relabeling) they are $k$-Stirling permutations of size smaller than $n$.  We label the root node with integer $1$.  For $1\le i \le k+1$, we label the $i$-th child as follows: if $\sigma_i$ is empty, then the $i$'th child is an external node; else the $i$-th child is labeled by the smallest element in $\sigma_i$.  We repeat this process recursively for each non-empty $\sigma_i$.
\end{bijection}

\section{Local types in \texorpdfstring{$k$}{k}-Stirling permutations}\label{SEC localtypes}

Ordinary permutations can be classified according to four local types (e.g., see~\cite{Flajo1980, Conrad2006}) :

\begin{definition}
Let $\tau=\tau_1\dots\tau_n$ be a permutation of size $n$, and let $\tau_0=\tau_{n+1}=-\infty$.  Then for $1\le j\le n$, index $j$ is called a \emph{peak} if $\tau_{j-1}<\tau_j>\tau_{j+1}$, a \emph{valley} if $\tau_{j-1}>\tau_j<\tau_{j+1}$, a \emph{double rise} if $\tau_{j-1}<\tau_j<\tau_{j+1}$, and a \emph{double fall} if $\tau_{j-1}>\tau_j>\tau_{j+1}$.
\end{definition}

Note that sometimes the boundary condition
$\tau_{n+1}=+\infty$ is used~\cite{Conrad2006}; however, the
 $\tau_{n+1}=-\infty$ is more consistent with respect to
the bijection between ordinary permutations and binary increasing trees.
Since a permutation $\tau=\tau_1\dots\tau_n$ has a corresponding binary increasing tree~\cite{Flajo1980}, the correspondence between boundary condition of the local type of the index $j$ and the node type of the $j$-th labeled node in the binary increasing tree is as follows.

\smallskip

{\footnotesize
\begin{center}
\begin{tabular}{|c|cccc|}
\hline
Local type&Peak  & Valley & Double rise & Double Fall\\
\hline
Condition&$\tau_{j-1}<\tau_j>\tau_{j+1}$ &$\tau_{j-1}>\tau_j<\tau_{j+1} $& $\tau_{j-1}<\tau_j<\tau_{j+1}$ &
$\tau_{j-1}>\tau_j>\tau_{j+1}$ \\
\hline
Node type& Leaf & Double node & Right-branching node &Left-branching node.\\
\hline
\end{tabular}
\end{center}}

In extending the notion of local types to $k$-Stirling permutations, it is desirable to preserve the analogue of the correspondence between local types and node types.  Thus we first clarify the node types of (k+1)-ary increasing trees.
By definition of $(k+1)$-ary increasing trees, every node has exactly $k+1$ (labeled) positions for children.
Some of these positions may be occupied by labeled nodes, while others may be vacant, represented by external nodes.
We propose the following definition.
\begin{definition}
\label{DEFbij}
In a $(k+1)$-ary increasing tree $T$ of size $n$, the \emph{node type} of the node labeled $i$, for $1\le i \le n$, is defined as the string $G_i(T)=g_{i,1}\dots,g_{i,k+1}\in\{0,1\}^{k+1}$ of length $k+1$, where $g_{i,j}=1$ if the $j$-th child is a labeled node, and $g_{i,j}=0$ if the $j$-th child is vacant.
\end{definition}

In other words, the sequence $G_i(T)$ specifies which of the $(k+1)$ children are not empty.

\begin{example}
In the case $k=2$ of ternary increasing trees, we have $8=2^3$ different types of nodes.
The sequence 111 corresponds to a triple node, 101 to a (left,right)-branching node, 110 to a (left,center)-branching node, 011 to a (center,right)-branching node, 100 to a left-branching node, 010 to a center-branching node, 001 to a right-branching node, and 000 to a leaf, respectively. See Figure~\ref{TYPESfig1} for an illustration.
\end{example}

\begin{figure}[!htb]
\centering
\includegraphics[angle=0,scale=0.9]{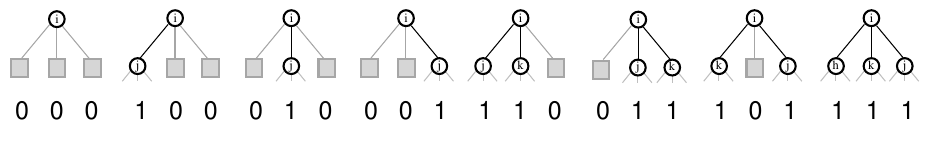}
\caption{The eight different node types in ternary increasing trees, assuming that $j,h,k>i\ge 1$.\label{TYPESfig1}}
\end{figure}

\begin{example}
In the case $k=1$ of binary increasing trees, we have $4=2^{2}$ different types of nodes.  The sequence 11 corresponds to a double node, the sequence 10 to a left-branching node, the sequence 01 to a right-branching node,
and the sequence 00 to a leaf.
\end{example}

This motivates an alternative definition for local types for ordinary permutations, which would lend to extension over the $k$-Stirling permutations.

\begin{definition}
\label{TYPESdef0}
Given an ordinary permutation $\tau=\tau_1\dots\tau_n\in\mathcal{S}_n$ and entry $i$, with $1\le i\le n$, let $j_i$ denote the index such that $\tau_{j_i}=i$. Set the boundary conditions to $\tau_0=\tau_{n+1}=-\infty$.  The local type $L_i(\tau)=\ell_{i,1}\ell_{i,2}$ of the entry $i$ in $\tau$ is a string of length $2$ with elements in $\{0,1\}$, defined as follows:

\begin{equation*}
\ell_{i,1} =
\begin{cases} 1 \quad \text{if}\,\,\tau_{j_{i}-1}>i,\\
0 \quad \text{otherwise};
\end{cases}
\quad\text{ and }\quad
\ell_{i,2} =
\begin{cases}1 \quad \text{if}\,\,\tau_{j_{i}+1}>i,\\
0 \quad \text{otherwise}.
\end{cases}
\end{equation*}

\end{definition}

The string $L_i(\tau)$ specifies which of the neighbors of $i$, going from left to right, are larger than $i$.  Thus $i$ is a peak if $L_i(\tau)=00$, a valley if $L_i(\tau)=11$, a double rise if  $L_i(\tau)=01$, and a double fall if $L_i(\tau)=10$.

\begin{example}
The ordinary permutation $\tau=2534716$ of size seven has the following local types:
$L_1(\tau)=11$, $L_2(\tau)=01$, $L_3(\tau)=11$, $L_4(\tau)=01$, $L_5(\tau)=00$, $L_6(\tau)=00$, and $L_7(\tau)=00$.
\end{example}

This new definition readily extends to the general case of $k$-Stirling permutation for $k\ge 1$.  We again define the local type of $i$ as a string of elements in $\{0,1\}$ that specifies which neighbors of $i$ in the permutation, going form left to right, are larger than $i$.

\begin{definition}
\label{TYPESdef1}
Given a $k$-Stirling permutation $\sigma=\sigma_1\sigma_2\dots\sigma_{kn}$ of size $n$, and an entry $i$, with $1\le i\le n$,
let $1\le j_{i,1}<\dots<j_{i,k}\le kn $ be the indices
such that $\sigma_{j_{i,h}}=i$. The
local type $L_i(\sigma)=\ell_{i,1}\dots \ell_{i,k+1}$ of the entry $i$ is a string
of length $k+1$, with elements in $\{0,1\}$, generated according to relative magnitudes of the instances of $i$ to their neighbors by the following rules (assuming boundary conditions
$\sigma_0=\sigma_{nk+1}=-\infty$):
\begin{equation*}
\ell_{i,1} =
\begin{cases} 1\quad \text{if}\,\,\sigma_{j_{i,1}-1}>i,\\
0 \quad \text{otherwise};
\end{cases}
\quad\text{ and }\quad
\ell_{i,j} =
\begin{cases}
1 \quad \text{if}\,\,\sigma_{j_{i,j-1}+1}>i,\\
0\quad \text{otherwise},
\end{cases}
\quad\text{ for } 1<j\le k+1.
\end{equation*}

\end{definition}

\begin{example}
The 3-Stirling permutation $\sigma=112233321445554666$ of size six has
the following local types $L_1(\sigma)=0011$, $L_2(\sigma)=0010$, $L_3(\sigma)=0000$, $L_4(\sigma)=0011$, $L_5(\sigma)=0000$, $L_6(\sigma)=0000$.
\end{example}

Since there are exactly $2^{k+1}$ different possible local types, we obtain the following result.
\begin{prop}
A $k$-Stirling permutation $\sigma=\sigma_1\sigma_2\dots\sigma_{kn}$ of size $n$ of the multiset
$\{1^{k}, 2^{k}, \dots, n^{k}\}$ can be classified according to $2^{k+1}$ different local types, with respect to the local rules in Definition~\ref{TYPESdef1}.
\end{prop}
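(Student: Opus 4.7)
The plan is essentially a counting argument followed by an achievability check. First, I would verify that Definition~\ref{TYPESdef1} unambiguously assigns a binary word $L_i(\sigma) = \ell_{i,1}\cdots\ell_{i,k+1}$ of length $k+1$ to every entry $i \in \{1,\dots,n\}$. The endpoint coordinates $\ell_{i,1}$ and $\ell_{i,k+1}$ are well-defined because the border convention $\sigma_0 = \sigma_{kn+1} = -\infty$ strictly precludes equality with any entry in $\{1,\dots,n\}$, so the comparisons there always yield a strict inequality. For an interior coordinate $\ell_{i,h}$ with $2 \le h \le k$, the $k$-Stirling property forces any symbol strictly between two occurrences of $i$ to be $\ge i$, so the entry $\sigma_{j_{i,h}-1}$ immediately to the left of the $h$-th copy of $i$ either equals $i$ (when the $(h-1)$-th and $h$-th copies are adjacent in the word) or is strictly greater than $i$; hence exactly one of the two cases ``$=$'' or ``$\neq$'' in Definition~\ref{TYPESdef1} applies.

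Consequently $L_i(\sigma) \in \{0,1\}^{k+1}$, so the number of possible local types is at most $2^{k+1}$. To obtain the matching lower bound, I would exhibit, for each target string $s = s_1 \cdots s_{k+1} \in \{0,1\}^{k+1}$, a concrete $k$-Stirling permutation and an entry $i$ in it realizing $L_i(\sigma) = s$. A uniform construction handles all $2^{k+1}$ cases: fix a label $i$ and toggle its coordinates independently. The endpoint bits $\ell_{i,1}$ and $\ell_{i,k+1}$ are toggled by placing $i$ either at the very beginning (resp.\ end) of the word or immediately after (resp.\ before) a larger-valued block; each interior coordinate $\ell_{i,h}$ is toggled by either inserting or omitting a block $j^k$ with $j>i$ between the $(h-1)$-th and $h$-th copies of $i$. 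Every such modification preserves the Stirling condition on all labels, and the toggles at different coordinates do not interact, so each of the $2^{k+1}$ strings is attained.

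The only point demanding any care is checking that the interior insertions above are mutually independent and leave the word a valid $k$-Stirling permutation; but both assertions are immediate from the observation that inserting a block $j^k$ with $j > i$ between consecutive occurrences of $i$ only introduces new entries larger than every label whose pair of occurrences surrounds them. I do not expect a real obstacle here: the proposition is really a book-keeping statement about Definition~\ref{TYPESdef1}, and the achievability can alternatively be read off from the bijection with $(k+1)$-ary increasing trees in Bijection~\ref{TYPESbij}, where the $2^{k+1}$ local types correspond to the $2^{k+1}$ choices of ``external'' versus ``internal'' among the $k+1$ child-slots of a node. Thus the classification partitions $k$-Stirling permutations entrywise into exactly $2^{k+1}$ non-empty local types.
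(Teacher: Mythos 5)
Your proposal is correct and its core argument --- each entry receives a well-defined binary word of length $k+1$, hence there are $2^{k+1}$ possible types --- is exactly the paper's (one-line) justification. The well-definedness check and the achievability construction showing every type is actually realized are sound additions, but go beyond what the paper proves for this proposition.
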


\begin{remark}
The local types of $k$-Stirling permutations are closely related to the distribution of the number of $j$-ascents, $j$-descents and $j$-plateaux, as introduced by Janson et al.~\cite{JanKuPa2008}. These parameters generalize the standard notion of ascents, descents and plateaux for ordinary permutations. Let $2\le j\le k+1$: if $\ell_{i,j}=1$, then there is an occurrence of a $j$-ascent, implying a later $(j+1)$-descent. 
\end{remark}

It remains to prove that the local types of $k$-Stirling permutations correspond to the node types of $(k+1)$-ary increasing trees.

\begin{theorem}
\label{THMbij}
By Bijection~\ref{TYPESbij}, the local types $L_i(\sigma)$ in a $k$-Stirling permutation $\sigma=\sigma_1\dots\sigma_{kn}$ of size $n$ coincide with the node types $G_i(T)$ of the corresponding $(k+1)$-ary increasing trees $T$ of size $n$: $L_i(\sigma)=G_i(T)$ for $1\le i\le n$.
\end{theorem}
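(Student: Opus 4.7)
The plan is to unpack the depth-first encoding of Bijection~\ref{TYPESbij} and simply read off, for each occurrence of $i$ in the code, what character sits immediately to its left and to its right. Let the $k+1$ child positions of node $i$ (from left to right) be $c_1,\dots,c_{k+1}$, and write $T_h^{(i)}$ for the sub-word of $\sigma$ produced by the depth-first traversal of the subtree rooted at $c_h$, with the convention that $T_h^{(i)}$ is empty whenever $c_h$ is an external node. By the rule ``add the label $v$ to the code the first $k$ times one returns to $v$'', the code fragment contributed by the subtree rooted at $i$ is exactly
\[
T_1^{(i)}\, i \, T_2^{(i)} \, i \, T_3^{(i)} \, \cdots \, i \, T_{k+1}^{(i)},
\]
so the $h$-th copy of $i$ is immediately preceded by $T_h^{(i)}$ and immediately followed by $T_{h+1}^{(i)}$, falling back on the ambient context when an adjacent $T$ happens to be empty.

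Two elementary observations then drive the argument. First, every label appearing in $T_h^{(i)}$ is a (proper) descendant of $c_h$ and hence carries a label $>i$; in particular, if $T_h^{(i)}$ is non-empty, both its first and last entries exceed $i$. Second, the character of $\sigma$ that lies immediately outside $i$'s subtree (on either side) is always $<i$: climbing the parent chain from $i$ until one reaches an ancestor that is not a leftmost child (respectively, not a rightmost child) produces either a copy of a proper ancestor of $i$, whose label is automatically $<i$, or the boundary sentinel $\sigma_0=-\infty$ (respectively $\sigma_{kn+1}=-\infty$).

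Given these two observations, the three cases of Definition~\ref{TYPESdef1} are checked coordinate-by-coordinate against Definition~\ref{DEFbij}. For $\ell_{i,1}$, the neighbour $\sigma_{j_{i,1}-1}$ is the last symbol of $T_1^{(i)}$ when $c_1$ is internal (thus $>i$, forcing $\ell_{i,1}=1$), or the left-ambient character when $c_1$ is external (thus $<i$, forcing $\ell_{i,1}=0$); either way $\ell_{i,1}=g_{i,1}$. For a middle coordinate $2\le h\le k$, $\sigma_{j_{i,h}-1}$ is the last symbol of $T_h^{(i)}$ (and hence $\ne i$) when $c_h$ is internal, while it equals the previous copy of $i$ (and hence $=i$) when $c_h$ is external; this gives $\ell_{i,h}=g_{i,h}$. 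The case $\ell_{i,k+1}$ is symmetric to $\ell_{i,1}$, now using $\sigma_{j_{i,k}+1}$, the first symbol of $T_{k+1}^{(i)}$, and the right-ambient character.

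The only point requiring some genuine care is the second observation: verifying that the character of $\sigma$ adjacent to $i$'s subtree on either side is $<i$ regardless of how many successive layers of leftmost (respectively rightmost) ancestors one must peel off. The uniform boundary convention $\sigma_0=\sigma_{kn+1}=-\infty$ absorbs all these sub-cases into a single clean statement, which is precisely what makes the identification $L_i(\sigma)=G_i(T)$ come out exactly rather than up to boundary corrections; with that in hand the theorem reduces to a direct case tabulation.
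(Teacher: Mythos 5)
Your proposal is correct and follows essentially the same route as the paper's proof: both unpack the depth-first encoding so that the code fragment of node $i$ reads $T_1^{(i)}\, i\, T_2^{(i)}\, i \cdots i\, T_{k+1}^{(i)}$ and then check the coordinates $\ell_{i,1}$, $\ell_{i,h}$ ($2\le h\le k$), and $\ell_{i,k+1}$ case by case against $g_{i,1},\dots,g_{i,k+1}$. If anything, you are more explicit than the paper about why the symbol adjacent to $i$'s subtree on either side is $<i$ (the paper just invokes the depth-first walk and the increasing labelling), but the argument is the same.
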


\begin{proof}
We use the bijection from Theorem~\ref{STIRPprop1T} between $k$-Stirling permutations and $(k+1)$-ary increasing trees, based on a depth-first walk. We start the depth-first walk at the root of a given $(k+1)$-ary increasing tree $T$ of size $n$ with node types $G_1(T),\ldots,G_n(T)$, and construct the corresponding $k$-Stirling permutation $\sigma=\sigma(T)$ of size $n$ according to Bijection~\ref{TYPESbij}.
Let $1\le j_{i,1}<\dots<j_{i,k}\le kn $ be the indices such that $\sigma_{j_{i,h}}=i$.
We show that the local order type $L_i(\sigma)=\ell_{i,1}\dots \ell_{i,k+1}$ equals the node type $G_i(T)=g_{i,1}\dots g_{i,k+1}$ of the node labeled $i$, for all $1\le i\le n$.

Let us consider the first position out of the $k+1$ positions for children of the node $i$.
If the first position is vacant, the first element of the node degree type is $g_{i,1}=0$. By Bijection~\ref{TYPESbij} the vacancy implies that $i=\sigma_{j_{i,1}}>\sigma_{j_{i,1}-1}=m$, and consequently the first element of the local type is $\ell_{i,1}=0$, since a smaller number $m<i$ must have been observed earlier according to the depth-first walk and the property that the tree is increasingly labeled. (If $i$ is the root, then we use the boundary conditions $m=-\infty$.)

On the other hand, if the first position is not vacant, then $g_{i,1}=1$. By the depth-first walk and the definition of increasing trees we have $i=\sigma_{j_{i,1}}<\sigma_{j_{i,1}-1}$, so $\ell_{i,1}=1$.

\begin{figure}[!htb]
\centering
\includegraphics[angle=0,scale=1]{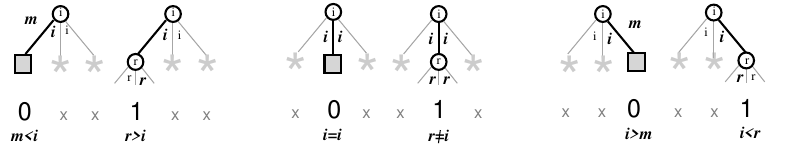}
\caption{A schematic representation of the correspondence between
local order types in $2$-Stirling permutations and local node types in ternary increasing trees according to
Bijection~\ref{TYPESbij}: the tree is traversed according to a depth-first walk, assuming that $r>i$ and
$m<i$.\label{TYPESfigproof}}
\end{figure}

An analogous rationale holds for the case of the $(k+1)$-st position of the node $i$.

Now let us consider the rest of the positions for the children of the node $i$.
For $1\le h \le k$, note that $g_{i,h}=0$ implies that the indices $j_{i,h}$
and $j_{i,h+1}$ satisfy $j_{i,h}+1=j_{i,h+1}$.  Consequently, $\sigma_{j_{i,h}+1}=i$, and thus $\ell_{i,h}=0$.
Conversely, if $g_{i,h}=1$, then by the construction in Bijection~\ref{TYPESbij} there is a substring between the $h$-th and $(h+1)$-st occurrences  of $i$, and this substring is composed of elements greater than $i$.  Consequently, $\sigma_{j_{i,h}+1}>i$, and thus $\ell_{i,h}=1$.

\end{proof}

\section{Path diagrams and local types of \texorpdfstring{$k$}{k}-Stirling permutations}\label{SEC PathLocTypes}
\subsection{Path diagrams and \texorpdfstring{$k$}{k}-Stirling permutations\label{TYPESsublocal}}

Ordinary permutations correspond to path diagrams related to Motzkin paths in their description via local types, as shown in the works of Fran\c{c}on and Viennot~\cite{Francon1979} and Flajolet~\cite{Flajo1980}.  In this section, we extend their ideas to give a bijection between $k$-Stirling permutations and a more general set of plane path diagrams called \L ukasiewicz paths.  


\begin{definition}
\label{def:L-path}
A \L ukasiewicz path is a path on the $x$-$y$ plane which starts at the origin, remains in the first quadrant (where $x\ge 0$ and $y\ge 0)$, ends on the $x$-axis, and which consists only of the following types of steps:

\begin{itemize}
    \item ``rise" steps $a^k$ from $(x,y)$ to $(x+1, y+k)$, for some non-negative integer $k$,
    \item ``fall" step $b$ from $(x,y)$ to $(x+1, y-1)$.
\end{itemize}

\end{definition}

There are many equivalent ways to represent a particular \L ukasiewicz path:

\begin{itemize}
    \item as a sequence of points in the $x$-$y$ plane, specified by their $x$-$y$ coordinates;
    \item as a sequence of the $y$-coordinates of those points (since the $x$-coordinates always start at 0 and increase by 1, according to the definition);
    \item as a word $L=L_1\dots L_n$ over the alphabet 
    $\mathcal{A}=\{b,a^0, a^1, a^2, \ldots\}$ corresponding to the sequence of steps of the path, beginning at the origin.
    
\end{itemize}

\begin{figure}[!htb]
    \centering
    \includegraphics[angle=0,scale=1]{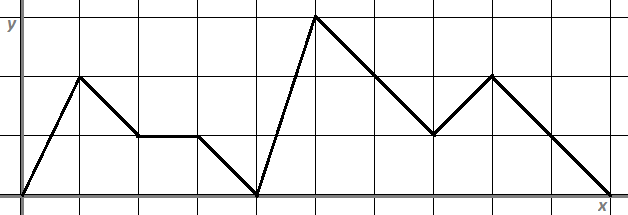}
    \caption{A \L ukasiewicz path.}
    \label{fig:L-path}
\end{figure}

The path in Figure~\ref{fig:L-path} is an example of a \L ukasiewicz path.  It can be represented as the following sequence of points in the $x$-$y$ plane: 
$$
(0,0), (1,2), (2,1), (3,1), (4,0), (5,3), (6,2), (7,1), (8,2), (9,1), (10,0).
$$
This path can be represented as the following sequence of the $y$-coordinates: 
$$
(0,2,1,1,0,3,2,1,2,1,0).
$$
This path can be represented as the following word over the alphabet

$\mathcal{A}=\{b,a^0, a^1, a^2, \ldots\}$ :
$$
a^2 b a^0 b a^3 b b a^1 b b .
$$

Keeping track of the $y$ coordinates is important when establishing correspondences between plane path diagrams and other combinatorial objects. We therefore define \emph{labeled} paths as the positive paths in which each step is indexed by the $y$ coordinate of
the point from which that step starts.  

\begin{notation}
\label{notation:labeled L-path}
Suppose a \L ukasiewicz path is represented by a word $L=L_1\dots L_n$ over the alphabet $\mathcal{A}=\{b,a^0, a^1, a^2, \ldots\}$  and a corresponding sequence $(0, y_1, \ldots, y_n)$ of the $y$-coordinates.  A \textit{labeled} version of the path has the same letters as the word $L=L_1\dots L_n$, with $j$-th letter having a subscript $y_{j-1}$ for each $j=1, \ldots, n$ (where $y_0=0$).

\end{notation}

For example, the labeled version of the path in Figure~\ref{fig:L-path} is
$$
a^2_0 b_2 a^0_1 b_1 a^3_0 b_3 b_2 a^1_1 b_2 b_1 .
$$

The methods which Flajolet presented in \cite{Flajo1980} of establishing a correspondence between a set of Motzkin paths and a desired set of some other combinatorial objects (e.g., permutations of $n$ objects) involve determining a scheme where, given a Motzkin path, one constructs a combinatorial object one path step at a time.  Such construction schemes allow for more than one choice of action at any one step, with the number of choices being a function of the $y$-coordinate of the point from which the step originates (called the \textit{possibility function}).  The bijection with the desired set of combinatorial objects is then with a pair $(M,p)$ where $M$ is a Motzkin path and $p$ is a \textit{possibility sequence} that keeps track of the choice one made at each step of construction
(see \cite{Flajo1980},~\cite{Francon1979}).

Let $\mathcal{A^+}$ be the alphabet of all the letters that could comprise a labeled \L ukasiewicz path:
\begin{equation*}
\mathcal{A^+}=\{b_0,b_1,b_2\dots\}\cup \bigcup_{\ell=0}^{\infty}\{a^\ell_0, a^\ell_1, a^\ell_2, \dots\}.
\end{equation*}

We adapt the definition of a system of path diagrams from~\cite{Flajo1980} to \L ukasiewicz paths:

\begin{definition}
A system of labeled \L ukasiewicz path diagrams is specified by a function $\pos: \mathcal{A^+}\to \N$ (called a \textit{possibility function}), which takes a letter in the alphabet $\mathcal{A^+}$ and assigns it a non-negative integer.   A \textit{path diagram} is a pair $(L, p)$, where $L=L_1\dots L_n$ is a labeled \L ukasiewicz path, and $p$ is a sequence of non-negative integers 
$(p_1,\dots, p_n)$ such that  $0\le p_j < \pos(L_j)$ for all $j=1,\ldots,n$.
\end{definition}

Now we are ready to state the connection between path diagrams and $k$-Stirling permutations, using their correspondence with $(k+1)$-ary increasing trees.

\begin{theorem}
\label{THMpath}
The set of $k$-Stirling permutations of size $n+1$ is in bijection with the system of labeled \L ukasiewicz path diagrams whose possibility function $\pos(.)$ is given by
\begin{equation*}
\pos(a^\ell_j)=\binom{k+1}{\ell+1}(j+1) \quad\text{ for }\quad 0\le \ell \le k, 
\quad\quad\quad \pos(b_j)=j+1,
\end{equation*}
and which has the following restrictions on the plane paths:
\begin{itemize}
    \item the plane paths are comprised of $n$ steps;
    \item the ``rise" steps are restricted to $a^0,\dots,a^k$.
\end{itemize}

\end{theorem}

\begin{remark}
For the case where $k=1$, Theorem~\ref{THMpath} reduces to the correspondence between Stirling permutations of size $n+1$ and Motzkin paths of length $n$, as presented by Fran\c{c}on and Viennot~\cite{Francon1979}. 
\end{remark}

\begin{proof}
By Theorem~\ref{STIRPprop1T}, the set of $k$-Stirling permutations of size $n+1$ is in bijection with the set of $(k+1)$-ary increasing trees
of size $n+1$.  It therefore suffices to establish a bijection between the system of labeled \L ukasiewicz path diagrams specified in Theorem~\ref{THMpath} and the set of $(k+1)$-ary increasing trees
of size $n+1$.

Given a path diagram $(L,p)$, where $L$ is labeled \L ukasiewicz path $L=L_1\ldots L_n$ and $p=(p_1,\ldots,p_n)$ is a sequence of non-negative integers with $0\le p_i < \pos(L_i)$, we will provide an algorithm for constructing a unique $(k+1)$-ary increasing tree with $n+1$ internal nodes.  At each step of the path, there will be a number of possible ways to carry out the instructions, which will correspond to the defined possibility function for that labeled step.  Thus each of the path diagrams induced by the path $L$ can be assigned a unique $(k+1)$-ary tree that can be constructed using $L$.

First, consider the following algorithm for constructing a $(k+1)$-ary tree using the path $L=L_1\ldots L_n$, which could result in different trees depending on the choices made during each step.  We begin with one placeholder for an internal node of the tree $T$.  For steps $1\le i \le n$: at $i$-th step of the path, choose one of the available placeholders, and replace it with a node labeled by $i$.  If $L_i=b_j$, there are no further instructions.  However, if $L_i=a^\ell_j$, then out of the $k+1$ possible child positions for the node, choose $\ell+1$ of them as placeholders for internal nodes.

Note that in each of the cases, the net change in the number of placeholders at the $i$-th step of the construction is equal to the change in the y-coordinate at the $i$-th step of the path.  Since we begin the construction with one placeholder, at the beginning of the $i$-th construction step the number of placeholders is one more than the y-coordinate of the point where the $i$-th step of the path begins. After the final $n$-th step, the y-coordinate the point where the last step ends is 0, so there is only one placeholder left.  To complete the tree, replace that placeholder with a node labeled by $n+1$.

Knowing the number of placeholders at each step allows us to compute the number of possible ways each construction step can be implemented, and thus determine the appropriate possibility function.  If $L_i=b_j$, there are $j+1$ possible placeholders to choose from for the new node labeled by $i$, so $\pos(b_j)=j+1$. If $L_i=a^\ell_j$, the number of possibilities is $\pos(a^\ell_j)=
\binom{k+1}{\ell+1}(j+1)$, taking into account both the number of existing placeholders to choose from, and the possibilities for choosing a new placeholder out of the node's $\ell+1$ possible child positions.

To establish a unique correspondence between the path diagram $(L,p)$, we will induce an ordering on the internal nodes (depth-first, with the child nodes ordered from left to right) and on the node types (lexicographical ordering).  The algorithm for constructing a unique $(k+1)$-ary tree $T$ using this path diagram is then as follows.  

As before, we begin with one placeholder for an internal node of the tree $T$. For steps $1\le i \le n$: 

\begin{itemize}
    \item If the $i$-th step of the path is a ``fall" step $L_i=b_j$: using depth-first order, choose the $(p_i+1)$-st available placeholder.  Replace it with a node labeled by $i$ that has $k+1$ external child nodes (so the chosen node becomes a leaf with respect to internal nodes of the tree).
    \item If the $i$-th step of the path is a ``rise" step $L_i=a^\ell_j$: Let $s$ be the number of integer times $\binom{k+1}{\ell+1}$ goes into $p_i$, and let $t$ be the remainder of $p_i$ modulo $\binom{k+1}{\ell+1}$, such that $p_i=s\cdot \binom{k+1}{\ell+1}+t$. Using depth-first order, choose the $(s+1)$-st available placeholder.  Using the lexicographical ordering on node types with $\ell+1$ external child nodes out of a total of $(k+1)$ child nodes, choose the $(t+1)$-st local type. Replace the chosen placeholder with a node of this node type, with all the external node children labeled with placeholders, and label the chosen node by $i$.
\end{itemize}

After $n$ steps, there will be a single available placeholder remaining.  To complete the tree, replace that placeholder with a node labeled by $n+1$ that has $k+1$ external child nodes.
(Example~\ref{Ex-for-THMpath} goes through a specific example of this algorithm. ) 

\end{proof}

\begin{example}\label{Ex-for-THMpath}
In this example, we consider the case $k=2$, which corresponds to ternary increasing trees and to Stirling permutations.
Figure~\ref{TYPESfig2} illustrates the steps of the algorithm specified in the proof of Theorem~\ref{THMpath} for constructing a ternary tree with seven labeled nodes using the path diagram $(L,p)$ where 
$L=a^2_0 a^1_2 b_3 b_2 a^0_1 b_1$ and $p=(0,1,3,0,3,1)$.

\begin{figure}[htb]
\centering
\includegraphics[angle=0,scale=0.9]{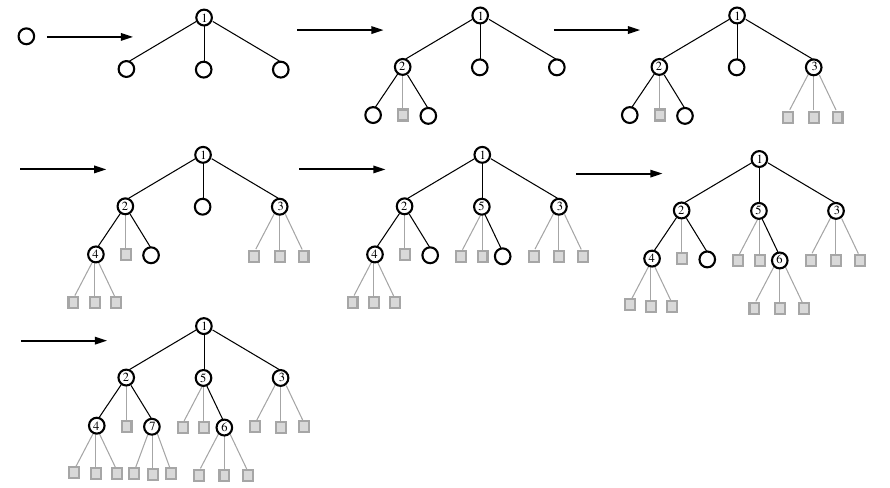}
\caption{Construction of a ternary increasing tree. \label{TYPESfig2}}
\end{figure}

The first arrow indicates the step corresponding to the first path step $L_1 = a^2_0$; the first (and at this stage, only) available placeholder gets replaced with a node of type 111 and labeled ``1". (One can check that $s=t=0$ at this step, which correspond with the choice of the first available placeholder and the first node type.)

The second arrow indicates the step corresponding to the second path step $L_2=a^1_2$; $p_2=1$, and $\binom{2+1}{1+1}=3$, so $s=0$ and $t=1$.  Thus the first available placeholder gets chosen (through depth-first method with left-to-right order) to get the label ``2".  Since by lexicographic order $011<101<110$, the second node type 101 gets chosen, with the internal child nodes getting the placeholders.

The third arrow indicates the step corresponding to the third path step $L_3=b_3$; $p_3=3$, so the fourth available placeholder gets chosen (through depth-first method with left-to-right order) to get the label ``3", with three external child nodes.

The algorithm continues until the seventh arrow, where the final available placeholder is gets the label ``7" and gets three external child nodes.

By using the Bijection~\ref{TYPESbij} between $(k+1)$-ary increasing trees and $k$-Stirling permutations for $k=2$,
we immediately obtain the Stirling permutation $\sigma$ of size seven that corresponds to the tree constructed in Figure~\ref{TYPESfig2}: $\sigma=44227715566133$.
Note that we can also directly construct the Stirling permutation, since the local types of the outdegree of the nodes
in the ternary increasing tree correspond to the local types of the numbers in the permutation. One may think of this procedure as some kind of ``flattening
of the tree to a line'' (compare with the sequence of trees in Figure~\ref{TYPESfig2}):
\begin{equation*}
\begin{split}
\circ& \to \circ1\circ1\circ \to \circ22\circ1\circ1\circ \to \circ22\circ1\circ133
\to 4422\circ1\circ133 \to 4422\circ155\circ133 \\
&\to 442277155\circ133 \to 44227715566133.
\end{split}
\end{equation*}
\end{example}

\subsection{Generating function of local types}\label{SEC ogf local types}

Flajolet~\cite{Flajo1980} used the correspondence between path
diagrams and formal power series to obtain continued
fraction representations of the generating functions of many
parameters in ordinary permutations. In the context
of permutations and binary increasing trees, he derived a continued fraction representation of the generating
function of local types in permutations, and equivalently of node
types in binary increasing trees. The path diagram
representation of $k$-Stirling permutations and $(k+1)$-ary
increasing trees similarly allows us to obtain a branched continued fraction representation of the generating function of the local types and
of node types. 

First we have to recall relevant definitions from Flajolet's work~\cite{Flajo1980} concerning formal power series.
Let $C\lAngle X\rAngle$ denote the monoid algebra of formal power series $s=\sum_{u\in X^{\ast}}s_u\cdot u$ on the set of
non-commutative variables (alphabet) $X$ with coefficients in the field of complex
numbers, with sums and Cauchy product defined in the usual way
\begin{equation*}
s+t=\sum_{u\in X^{\ast}}(s_u+t_u)\cdot u,\qquad s\cdot t=\sum_{u\in X^{\ast}}\bigg(\sum_{vw=u}s_vt_w\bigg)\cdot u.
\end{equation*}
In order to define the convergence of a series, one introduces
the valuation of a series $\val(s)$, defined by
\begin{equation*}
\val(s)=\min\{|u|: s_u\neq 0\},
\end{equation*}
where $|u|$ denotes the length of the word $u\in X^*$.
A sequence of elements $(s_n)_{n\in\N}$, $s_n\in C\lAngle X\rAngle$, converges to a limit $s\in C\lAngle X\rAngle$ if
\begin{equation*}
\lim_{n\to\infty}\val(s-s_n)=\infty.
\end{equation*}
Multiplicative inverses exist for series having a constant term different from
zero; for example $(1-u)^{-1}=\sum_{\ell\ge 0}u^{\ell}$, where $(1-u)^{-1}$ is an inverse of $u$ so long as $\val(u)>0$.
Note that we will subsequently use the notation $(u|v)/w=uw^{-1}v$.
The characteristic series $\chara(S)$ of $S\subset X^*$ is defined as
\begin{equation*}
\chara(S)=\sum_{u\in S}u.
\end{equation*}
Finally, following~\cite{Flajo1980} we use for subsets $E, F$ of $X^*$ the alternative notations
$E + F $ for the union $E\cup F$, $E\cdot F$ for the extension to sets of the catenation operation on words, and let
$E^* = \epsilon + E + E\cdot E + E\cdot E\cdot E + \dots$,  with $\epsilon$ denoting the empty word.
Moreover, we will use a lemma (Lemma~1 of Flajolet~\cite{Flajo1980}), which allows to translate operations on sets of words into
corresponding operations on series, provided certain non-ambiguity conditions are satisfied.

\smallskip

\begin{lemma}
\label{Flaj}
Let $E$, $F$ be subsets of $X^*$. Then
\begin{enumerate}
\item $\chara(E+F)=\chara(E)+\chara(F)$ provided $E\cap F=\emptyset$,
\item $\chara(E\cdot F)=\chara(E)\cdot\chara(F)$ provided that $E\cdot F$ has the unique factorization property, $\forall u,u'\in E$ $\forall v,v'\in F$ $uv=u'v'$ implies $u=u'$ and $v=v'$,
\item $\chara(E^*)=(1-\chara(E))^{-1}$ provided the following two condition hold: $E^j\cap E^k=\emptyset$ $\forall j,k $ with $j\neq k$, each $E^k$ has the unique factorization property.
\end{enumerate}
\end{lemma}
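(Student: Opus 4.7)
The plan is to prove each of the three assertions by comparing coefficients in $C\lAngle X\rAngle$ term by term: for each word $u \in X^*$, count the number of factorizations (or representations) of $u$ that contribute to the left-hand side and check that it matches the number of factorizations contributing to the right-hand side.

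For part (1), expanding the definition gives $\chara(E+F) = \sum_{u \in E \cup F} u$. Under the disjointness hypothesis $E \cap F = \emptyset$, every $u \in E \cup F$ belongs to exactly one of $E$ or $F$, so splitting the sum yields $\chara(E) + \chara(F)$. For part (2), I expand the Cauchy product
\begin{equation*}
\chara(E)\cdot\chara(F) = \bigg(\sum_{v\in E} v\bigg)\cdot\bigg(\sum_{w\in F} w\bigg) = \sum_{u\in X^*}\bigg(\sum_{\substack{v\in E,\, w\in F\\ vw=u}} 1\bigg)\cdot u.
\end{equation*}
The coefficient of $u$ counts the number of factorizations $u = vw$ with $v \in E$ and $w \in F$; by the unique factorization hypothesis this coefficient is $1$ for $u \in E\cdot F$ and $0$ otherwise, so the sum equals $\chara(E\cdot F)$.

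For part (3), the key step is to write $E^* = \bigcup_{k \ge 0} E^k$ as a disjoint union; the disjointness $E^j \cap E^k = \emptyset$ for $j \neq k$ is provided by hypothesis. Applying part (1) inductively (or rather, coefficient-wise as in part (1)) gives $\chara(E^*) = \sum_{k \ge 0} \chara(E^k)$, and an induction on $k$ using part (2) together with the assumed unique factorization of each $E^k$ gives $\chara(E^k) = \chara(E)^k$. Combining yields $\chara(E^*) = \sum_{k \ge 0} \chara(E)^k$, and the formal identity $(1-\chara(E))\sum_{k \ge 0}\chara(E)^k = 1$ in $C\lAngle X\rAngle$ then gives $\chara(E^*) = (1-\chara(E))^{-1}$.

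The main obstacle, and the only subtle point, is justifying convergence of the infinite sum $\sum_{k\ge 0} \chara(E)^k$ in the valuation topology on $C\lAngle X\rAngle$. This requires $\val(\chara(E)) \ge 1$, i.e.\ $\epsilon \notin E$; but this is automatic, because if $\epsilon$ were in $E$ then $\epsilon \in E^0 \cap E^1$, contradicting the disjointness $E^0 \cap E^1 = \emptyset$. Hence $\val(\chara(E)^k) \ge k \to \infty$, so the partial sums form a Cauchy sequence, the limit exists, and the quasi-inverse computation is legitimate. Once this point is in place, the three assertions follow from the elementary coefficient comparisons above.
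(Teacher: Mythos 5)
Your proof is correct. The paper itself offers no proof of this statement --- it is quoted verbatim as Lemma~1 of Flajolet \cite{Flajo1980} --- and your coefficient-by-coefficient argument is exactly the standard one, including the one genuinely subtle point: that the hypothesis $E^0\cap E^1=\emptyset$ forces $\epsilon\notin E$, hence $\val(\chara(E)^k)\ge k$, so the series $\sum_{k\ge 0}\chara(E)^k$ converges in the valuation topology and the quasi-inverse identity is legitimate.
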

With the help of Lemma~\ref{Flaj}, one can translate operations on sets of words into corresponding operations on series, provided that the non-ambiguity conditions are
satisfied.

Let $C^{[h]}_i=C^{[h]}_i(k)$ be defined as the characteristic series of all labeled paths with steps given by $a^0, a^1,\dots,a^k,b$ starting and ending at y-coordinate $i$, with $i\ge 0$, never going below the y-coordinate $i$ and above the y-coordinate $i+h$, with $h\ge 0$.
We assume the formal convention that $C^{[h]}_i=0$ if $h<0$. Moreover, let $C^{[h]}=C^{[h]}_0$. We introduce the notation

$$\left<C_i^{[h]}\right>_1:=(a^1_i|b_{i+1}) C^{[h-1]}_{i+1}, \quad\quad \left<C_i^{[h]}\right>_2:=((a^2_i|b_{i+2})\cdot C^{[h-2]}_{i+2}|b_{i+1})\cdot C^{[h-1]}_{i+1},$$

and in general for integer $1\le \ell\le k$ let $\left<C_i^{[h]}\right>_{\ell}$ be defined by
\begin{equation*}
\left<C_i^{[h]}\right>_{\ell}= (\dots((a^\ell_i|b_{i+\ell}) C^{[h-\ell]}_{i+\ell}|b_{i+\ell-1}) C^{[h-(\ell-1)]}_{i+\ell-1}\dots |b_{i+1}) C^{[h-1]}_{i+1}.
\end{equation*}

\begin{prop}
\label{PROPfrac}
The characteristic series $C^{[h]}_i=C^{[h]}_i(k)$ of all labeled paths with steps given by $a^0, a^1,\dots,a^k,b$ starting and ending at y-coordinate $i$, with $i\ge 0$, never going below y-coordinate $i$ and above y-coordinate $i+h$, with $h\ge 0$, satisfies
\begin{equation*}
C^{[h]}_i=\frac{1}{1-a^0_i- \sum_{\ell=1}^{k} \left<C_i^{[h]}\right>_{\ell}}.
\end{equation*}
The double sequence $(C^{[h]}_i)_{i,h\ge 0}$ converges for $h\to\infty$. Its limit $(C_i)_{i\ge 0}$ is given as follows:
\begin{equation*}
C_i=\frac{1}{1-a^0_i- \sum_{\ell=1}^{k} \Big<C_i\Big>_{\ell}}.
\end{equation*}

In particular, $C=C_0$ equals the characteristic sequence of all labeled paths $\mathcal{P}$, starting and ending at the $x$-axis, never going below the $x$-axis, with steps given by $a^0, a^1,\dots,a^k,b$.
\end{prop}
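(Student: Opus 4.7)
The plan is to prove the identity for the bounded series $C_i^{[h]}$ by a recursive ``arch'' decomposition, then pass to the limit $h\to\infty$ via a valuation argument. Lemma~\ref{Flaj} is the main formal tool: its role is to translate unambiguous combinatorial decompositions of sets of labeled paths into identities between the corresponding characteristic series.

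First I would decompose a path counted by $C_i^{[h]}$ as a Kleene closure of primitive excursions, i.e., sub-paths returning to level $i$ for the first time only at the end. Such a primitive excursion is either a single level step $c_i$, or an arch that begins with a rise $a_{i,\ell}$ for some $1\le \ell\le k$ and ends with a fall $b_{i+1}$. Writing $A_{i,\ell}^{[h]}$ for the characteristic series of arches with initial rise $\ell$, Lemma~\ref{Flaj}(3) gives
\begin{equation*}
C_i^{[h]} = \frac{1}{1 - c_i - \sum_{\ell=1}^{k} A_{i,\ell}^{[h]}}.
\end{equation*}
Next I would unfold each arch by descending one unit at a time. Between the initial rise $a_{i,\ell}$ and the terminal fall $b_{i+1}$, the path stays strictly above level $i$ and must descend $\ell$ units in total; reading at each level $j = i+\ell, i+\ell-1,\dots,i+1$ the maximal sub-excursion at level $j$ sitting between two consecutive unit falls yields the factorization
\begin{equation*}
A_{i,\ell}^{[h]} = \left<C_i^{[h]}\right>_{\ell},
\end{equation*}
where the nested bracket uses exactly the $(u|v)/w = u w^{-1} v$ convention of~\cite{Flajo1980}. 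Substituting produces the announced recursion.

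For the passage to the limit, I would observe that $C_i^{[h+1]} - C_i^{[h]}$ is a sum indexed by paths that reach height $i+h+1$, hence of length at least $2(h+1)$. Therefore $\val(C_i^{[h+1]} - C_i^{[h]}) \ge 2(h+1)$, so $(C_i^{[h]})_{h\ge 0}$ is Cauchy in the valuation topology on $C\lAngle X\rAngle$ and converges to a series $C_i$. Moreover, the right-hand side of the recursion for $C_i^{[h]}$ only involves $C_{i+m}^{[h-m]}$ with $m\ge 1$, whose limits also exist by the same argument; passing to the limit yields the stated equation for $C_i$. Taking $i=0$ identifies $C = C_0$ with the characteristic series of all positive labeled paths, since no height bound survives in the limit.

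The main obstacle will be verifying the non-ambiguity hypotheses of Lemma~\ref{Flaj}(2)--(3) at each step. Here the height-indexed labeling $\lambda(u)$ is essential: every letter carries its starting height as subscript, so when reading a labeled word left to right one recovers the current height deterministically, and can therefore pinpoint where each primitive excursion begins and ends, as well as at which positions an arch executes its successive unit descents. This yields the unique-factorization property required for the Kleene star $(c_i + \sum_\ell A_{i,\ell}^{[h]})^{*}$ and for the iterated concatenations inside $\left<C_i^{[h]}\right>_{\ell}$, legitimizing the formal identities above.
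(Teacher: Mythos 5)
Your proposal is correct and follows essentially the same route as the paper: an unambiguous decomposition of bounded-height labeled paths translated into a series identity via Lemma~\ref{Flaj}, followed by a valuation argument for the limit $h\to\infty$. The one genuine difference is that the paper only carries this out explicitly for $k=2$, writing down $\mathcal{P}^{[0]},\dots,\mathcal{P}^{[3]}$ and asserting the general recursive description ``by induction'' (with the remark that general $k$ is ``similar but more involved''), whereas your first-return decomposition into $c_i$-steps and arches, together with the level-by-level unfolding of an arch with initial rise $a_{i,\ell}$ into the nested bracket $\left<C_i^{[h]}\right>_{\ell}$, is a direct argument valid for all $k$ and actually supplies the combinatorial justification the paper leaves implicit; your observation that the height labels make every factorization point recoverable is exactly what legitimizes the applications of Lemma~\ref{Flaj}. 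One small quantitative slip: a path that reaches height $i+h+1$ need not have length at least $2(h+1)$ when $k\ge 2$, since a single rise step can gain up to $k$ units; the correct lower bound is $(h+1)+\lceil (h+1)/k\rceil$ (the paper uses the even weaker $\lceil h/k\rceil$). This does not affect the argument, as any bound tending to infinity with $h$ gives the required convergence in the valuation topology.
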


\begin{remark}
The case $k=1$, treated by Flajolet~\cite{Flajo1980}, corresponds to binary increasing trees and ordinary permutations.
\end{remark}

\begin{proof}
For the sake of simplicity we only present the proof of the special case $k=2$, corresponding
to Stirling permutations and ternary increasing trees.
We prove that
\begin{equation*}
C^{[h]}_0= \frac{1}{1-a^0_0- \sum_{\ell=1}^{2} \Big<C_0^{[h]}\Big>_{\ell}}=  \frac{1}{1-a^0_0-(a^1_0|b_{1}) C^{[h-1]}_{1} - ((a^2_0|b_{2}) C^{[h-2]}_{2}|b_{1}) C^{[h-1]}_{1}}
\end{equation*}
equals the characteristic series of the set $\mathcal{P}^{[h]}$ of all labeled \L ukasiewicz paths with steps
$a^0, a^1, a^2, b$, starting and ending at y-coordinate zero with the y-coordinate bounded by 0 and some positive integer $h$.
More generally, for $i\ge 0$
\begin{equation*}
C^{[h]}_i=\frac{1}{1-a^0_i- \sum_{\ell=1}^{2} \left<C_i^{[h]}\right>_{\ell}}=\frac{1}{1-a^0_i-(a^1_i|b_{i+1}) C^{[h-1]}_{i+1} - ((a^2_i|b_{i+2}) C^{[h-2]}_{i+2}|b_{i+1}) C^{[h-1]}_{i+1}}
\end{equation*}
equals the characteristic series of all labeled \L ukasiewicz paths starting and ending at y-coordinate $i$ with y-coordinates bounded by $i$ and $i+h$ for some positive integer $h$.

Note that by our previous notation $(u|v)/w=uw^{-1}v$ and $(1-u)^{-1}=\sum_{\ell\ge 0}u^{\ell}$ regarding inverse series, we have for instance
\begin{equation*}
\begin{split}
(a^1_i|&b_{i+1}) C^{[h-1]}_{i+1}=a^1_i\big(C^{[h-1]}_{i+1}\big)^{-1}b_{i+1} \\
	&= a^1_i\sum_{\ell\ge 0}\Big(c_{i+1}+(a^1_{i+1}|b_{i+2}) C^{[h-2]}_{i+2} + ((a^2_{i+1}|b_{i+3}) C^{[h-3]}_{i+3}|b_{i+2}) C^{[h-2]}_{i+2}b_{i+2}\Big)^{\ell}b_{i+1}.
\end{split}
\end{equation*}

For the first few values of $h=1,2,3$ we obtain
\begin{equation*}
\begin{split}
\mathcal{P}^{[0]}&=(a^0_0)^*\\
\mathcal{P}^{[1]}&=(a^0_0 + a^1_0 (a^0_1)^*b_1)^*\\
\mathcal{P}^{[2]}&=(a^0_0 + a^1_0 (a^0_1+a^1_1 (a^0_2)^*b_2)^*b_1 
+a^2_0 (a^0_2)^*b_2(a^0_1+a^1_1 (a^0_2)^*b_2)^*b_1)^*\\
\mathcal{P}^{[3]}&=\Big(a^0_0 + a^1_0\big(a^0_1+a^1_1 (a^0_2+a^1_2 (a^0_3)^*b_3)^*b_2
+a^2_1 (a^0_3)^*b_3(a^0_2+a^1_2 (a^0_3)^*b_3)^*b_2\big)^*b_1 \\
&+a^2_0 (a^0_2+a^1_2 (a^0_3)^*b_3)^*b_2\big(a^0_1+a^1_1 (a^0_2+a^1_2 (a^0_3)^*b_3)^*b_2 \\
&\quad\quad\quad +a^2_1 (a^0_3)^*b_3(a^0_2+a^1_2 (a^0_3)^*b_3)^*b_2\big)^*b_1\Big)^*.
\end{split}
\end{equation*}

In order to simplify the recursive description of $\mathcal{P}^{[h]}$ we introduce the refined sets $\mathcal{P}^{[h]}_i$ consisting of the paths starting and ending at y-coordinate $i$, with y-coordinates for all steps bounded by $i$ and $i+h$ for some positive integer $h$, where $\mathcal{P}^{[h]}_0=\mathcal{P}^{[h]}$.  Note that $\mathcal{P}^{[h]}_i$
can easily be obtained from $\mathcal{P}^{[h]}_0=\mathcal{P}^{[h]}$ by shifting the index encoding the y-coordinate level by $i$.

We can write $\mathcal{P}^{[h]}$ in the following way.
\begin{equation*}
\begin{split}
\mathcal{P}^{[0]}&=(a^0_0)^*\\
\mathcal{P}^{[1]}&=(a^0_0 + a^1_0\mathcal{P}^{[0]}_{1} b_1)^*\\
\mathcal{P}^{[2]}&=(a^0_0 + a^1_0\mathcal{P}^{[1]}_{1} b_1         
                +a^2_0\mathcal{P}^{[0]}_{2}b_2\mathcal{P}^{[1]}_{1}b_1)^*\\
\mathcal{P}^{[3]}&=\Big(a^0_0 + a^1_0\mathcal{P}^{[2]}_{1}b_1
                +a^2_0\mathcal{P}^{[1]}_{2}b_2\mathcal{P}^{[2]}_{1}b_1
                +a^3_0\mathcal{P}^{[0]}_{3}b_3\mathcal{P}^{[1]}_{2}b_2\mathcal{P}^{[2]}_{1}b_1\Big)^*.
\end{split}
\end{equation*}
By induction one can prove the following unambiguous description of $\mathcal{P}^{[h]}$.
\begin{equation*}
\begin{split}
\mathcal{P}^{[h]}&=\Big(a^0_0 + a^1_0\mathcal{P}^{[h-1]}_{1}b_1 +a^2_0\mathcal{P}^{[h-2]}_{2}b_2\mathcal{P}^{[h-1]}_{1}b_1 + \cdots
+a^h_0\mathcal{P}^{[0]}_h b_h \ldots \mathcal{P}^{[h-1]}_{1}b_1\Big)^*.
\end{split}
\end{equation*}
More generally, we have
\begin{equation*}
\begin{split}
\mathcal{P}^{[h]}_i&=\Big(a^0_i + a^1_i\mathcal{P}^{[h-1]}_{i+1}b_{i+1} +a^2_i\mathcal{P}^{[h-2]}_{i+2}b_{i+2}\mathcal{P}^{[h-1]}_{i+1}b_{i+1} + \cdots
+a^h_i\mathcal{P}^{[0]}_{i+h} b_{i+h} \ldots \mathcal{P}^{[h-1]}_{i+1}b_{i+1}\Big)^*.
\end{split}
\end{equation*}
Since $\mathcal{P}^{[h]}_i$ is obtained from $\mathcal{P}^{[h]}_{0}=\mathcal{P}^{[h]}$ by an index shift, we recursively obtain the stated description
of $C_0^{[h]}$ by replacing the operations $+,\cdot,*$ on the sets of words by the series operations $+,|$, and inverse. Moreover, the characteristic series of the refined sets $\mathcal{P}^{[h]}_i$ is simply given by $C_i^{[h]}$.
One observes the inclusion
\begin{equation*}
\mathcal{P}^{[0]}\subset \mathcal{P}^{[1]}\subset \mathcal{P}^{[2]}\subset \dots \subset \mathcal{P},
\end{equation*}
or more generally
\begin{equation*}
\mathcal{P}^{[0]}_i\subset\mathcal{P}^{[1]}_i\subset \mathcal{P}^{[2]}_i\subset \dots \subset \mathcal{P}_{i}\quad i\ge 0.
\end{equation*}

A \L ukasiewicz path which starts at the x-axis and whose highest y-coordinate is $h$ must have length greater than or equal to $\big\lceil \frac{h}{k}\big\rceil$, thus
\begin{equation*}
\val\Big(C_i-C_i^{[h-1]}\Big)\ge \Big\lceil \frac{h}{k} \Big\rceil,
\end{equation*}
and consequently
\begin{equation*}
\lim_{h\to\infty}C_i^{[h]}=C_i.
\end{equation*}
The idea of the proof of the general case $k>2$ is similar.
\end{proof}

Subsequently, we will enumerate $k$-Stirling permutations, keeping track of the $2^{k+1}$ different local types.
Recall that each local type is a string of length $k+1$ over the alphabet $\{0,1\}$.  For our purposes, we arrange the local types first by the number of 1's, and then by lexicographic order.  For a given $k$-Stirling permutation, we will denote by $m_{i,j}$ the number of instances of the local type with $i$ 1's which are in the $j$-th position in the lexicographic order.

Let $P_{\mathbf{m}_0,\dots,\mathbf{m}_{k+1}}$ denote the number of $k$-Stirling permutations whose local types are
counted according to $\mathbf{m}_i=(m_{i,1},\dots, m_{i,\binom{k+1}{i}})$, with $0\le i \le k+1$.  To keep track of these counts, we will use the corresponding variables  $\mathbf{z}_i=(z_{i,1},\dots, z_{i,\binom{k+1}{i}})$, with $0\le i \le k+1$.  Note that $\mathbf{z}_0 = (z_{0,1})$ is the variable that keeps track of the local type with all zeros, and it would be unambiguous to refer to either $\mathbf{z}_0$ or $z_{0,1}$.

The generating function $P(\mathbf{z}_0,\dots,\mathbf{z}_{k+1},t)$ of $k$-Stirling permutations with respect to the $2^{k+1}$ local types is defined by
\begin{equation*}
P(\mathbf{z}_0,\dots,\mathbf{z}_{k+1},t)= \sum_{\mathbf{m}_0,\dots,\mathbf{m}_{k+1}}P_{\mathbf{m}_0,\dots,\mathbf{m}_{k+1}}\mathbf{\mathbf{z}_0}^{\mathbf{m}_0}
\dots\mathbf{\mathbf{z}_{k+1}}^{\mathbf{m}_{k+1}}t^n,
\end{equation*}
where $n=\sum_{i=1}^{k+1}\sum_{j=1}^{ \binom{k+1}{i}  }m_{i,j}$ is the length of the $k$-Stirling permutations.  

Now we can state the main result of this section: a branched continued fraction representation of the generating function
of local types in $k$-Stirling permutations.

\begin{theorem}\label{THM ogf k-striling}
The generating function $P(\mathbf{z}_0,\dots,\mathbf{z}_{k+1})$ of $k$-Stirling permutations, or equivalently $(k+1)$-ary increasing trees, with respect to the $2^{k+1}$ local types,
\begin{equation*}
P(\mathbf{z}_0,\dots,\mathbf{z}_{k+1},t)= \sum_{\mathbf{m}_0,\dots,\mathbf{m}_{k+1}}P_{\mathbf{m}_0,\dots,\mathbf{m}_{k+1}}\mathbf{\mathbf{z}_0}^{\mathbf{m}_0}
\dots\mathbf{\mathbf{z}_{k+1}}^{\mathbf{m}_{k+1}}t^n
\end{equation*}
is given by the branching continued fraction
\begin{equation*}
\frac{\mathbf{z}_0 t}{
1-1\cdot t\sum_{i=1}^{k+1}z_{1,i}
-\frac{1\cdot 2 \cdot t^2 \mathbf{z}_0 \sum_{i=1}^{\binom{k+1}{2}}z_{2,i}}
      {1-2\cdot t \sum_{i=1}^{k+1}z_{1,i} -\frac{2\cdot 3 \cdot t^2 \mathbf{z}_0
                                                  \sum_{i=1}^{\binom{k+1}{2}}z_{2,\ell}}
                                                {\dots} -\dots}
      -\dots -\frac{(k+1)!t^{k+1}\mathbf{z}_0^k z_{k+1,1}}{\dots}}.
\end{equation*}
\end{theorem}

\begin{coroll}\label{Corollary}
An expansion of the generating function $\sum_{n\ge 0}k^{n}\frac{\Gamma(n+1+\frac1k)}{\Gamma(\frac1k)}t^n$ is obtained from the generating function
$P(\mathbf{z}_0,\dots,\mathbf{z}_{k+1},t)$ by setting $\mathbf{z}_{\ell}=(1,\dots,1)$, $0\le \ell\le k+1$, and dividing by $t$.
In particular, we obtain for $k=2$ the identity
\begin{equation*}
\begin{split}
\sum_{n\ge 0}(2n+1)!!\,t^n
&= \frac{1}{1 - 1\cdot \binom{3}{1}t - \frac{1\cdot 2\cdot \binom{3}{2} t^2}{1-2\cdot \binom{3}{1}t -\frac{2\cdot 3\cdot \binom{3}{2} t^2}{1-3\cdot \binom{3}1 t\dots}-\frac{2\cdot 1\cdot 2\cdot 3\cdot \binom{3}{3} t^3}{1-4\cdot \binom{3}1 t\dots}} - \frac{1\cdot 2\cdot 3\cdot \binom{3}{3} t^3}{\big(1-3\cdot \binom31 t\dots\big)\big(1-2\cdot\binom{3}{1}t\dots\big)}}\\
&= 1 + 3t  +15t^2+105t^3 +945t^4+\dots
\end{split}
\end{equation*}
\end{coroll}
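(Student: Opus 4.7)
The plan is to derive the corollary by specializing the continued fraction of the preceding theorem via the substitution $z_{i,\ell} = 1$ for all admissible pairs $(i,\ell)$. Under this substitution all mark variables disappear and $P(\mathbf{1},\dots,\mathbf{1},t)$ becomes the ordinary generating function enumerating path diagrams by length, every diagram of length $n$ contributing exactly $t^n$.

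By \refT{THMpath}, a path diagram of length $n$ is in bijection with a $(k+1)$-ary increasing tree of size $n+1$, equivalently with a $k$-Stirling permutation of size $n+1$. Consequently, the coefficient of $t^n$ in $P(\mathbf{1},\dots,\mathbf{1},t)$ equals $Q_{n+1}(k)$, which by \eqref{qn} equals $k^{n+1}\Gamma(n+1+1/k)/\Gamma(1/k)$. This yields the claimed identity for general $k$.

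For the explicit $k=2$ formula, one specializes further: $Q_{n+1}(2) = \prod_{\ell=1}^{n+1}(2\ell-1) = (2n+1)!!$, so the left-hand side becomes $\sum_{n \ge 0}(2n+1)!!\, t^n$. The displayed continued fraction on the right is obtained by direct substitution of $z_{i,\ell} = 1$ into the general continued fraction stated in the preceding theorem. Each sum $\sum_{\ell=1}^{\binom{k+1}{i}} z_{i,\ell}$ collapses to the binomial coefficient $\binom{k+1}{i}$, while the integer prefactors come from the possibility function of \refT{THMpath}, namely $\pos(a_{j,\ell}) = \binom{k+1}{\ell+1}(j+1)$, $\pos(b_j) = j+1$, and $\pos(c_j) = (k+1)(j+1)$, which for $k=2$ at successive heights $j=0,1,2,\dots$ produce the factors $1,2,3,\dots$ appearing in the numerators of the displayed fraction.

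The only step requiring care is the algebraic bookkeeping when one truncates the continued fraction at low order: the expansion up to $t^4$ must reproduce the coefficients $1,3,15,105,945$, which serves as a consistency check against the closed form $(2n+1)!!$. I do not anticipate any conceptual obstacle; the entire argument is a routine specialization whose substance lies in the preceding theorem rather than in the present corollary.
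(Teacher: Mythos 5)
Your proposal is correct and takes essentially the same route as the paper, which disposes of the corollary in a single line at the end of the proof of the preceding theorem by setting $\mathbf{z}_{\ell}=(1,\dots,1)$. You merely make explicit the step the paper leaves implicit, namely that the coefficient of $t^n$ after specialization counts path diagrams of length $n$ and hence equals $Q_{n+1}(k)$ by \refT{THMpath} and \eqref{qn}; there is no gap.
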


\begin{remark}
Below each fraction bar in the branched continued fraction representation of the formal power series there are $k+1$ terms, starting with $1$.
As mentioned earlier the case $k=1$ is a result of Flajolet~\cite{Flajo1980}.
\end{remark}

\begin{remark}
P\'etr\'eolle, Sokal, and Zhu~\cite{PSZ2018} demonstrate that branched continued fractions do not provide unique representations of power series.  They show how the same infinite series presented in Collorary~\ref{Corollary} can have at least three different branched continued fraction representations.  
\end{remark}

\begin{proof}
According to Theorem~\ref{THMbij}, we can use the same representation for the local types of $k$-Stirling permutations and node types of $(k+1)$-ary trees.  In particular, the local types with $i$ 1's correspond to the node types with $i$ internal children.

Recall the construction from the proof of Theorem~\ref{THMpath} of a $(k+1)$-ary tree of size $n+1$, given a path diagram $(L,p)$ with $L$ a labeled \L ukasiewicz path of length $n$.  The ``rise" step $a^\ell_j$ corresponds to a node with $\ell+1$ internal children, and the ``fall" step $b_j$ corresponds to a leaf node with respect to internal children.  At the end of the construction one more leaf node was created.

Taking these correspondences into account, and considering the number of possibilities described by the construction, we define the morphism
$\mu:  C\lAngle X\rAngle\to C\,\lbrakk\, \mathbf{z}\,\rbrakk$ by

\begin{equation*}
\begin{split}
\mu(a^\ell_j) &= (j+1)t \sum_{i=1}^{  \binom{k+1}{\ell+1}} z_{\ell+1,i}\quad\text{ for } \quad 0\le \ell \le k, \\
\mu(b_j)        &= (j+1)t \mathbf{z}_0.
\end{split}
\end{equation*}

Taking into account the creation of one more leaf node at the end of the construction, we get the generating function of $(k+1)$-ary trees with respect to the node types from the characteristic series $C$ of labeled paths:
$P(\mathbf{z}_0,\dots,\mathbf{z}_{k+1},t)= z_0 t \mu(C)$.

\end{proof}

\section{Other branched continued fractions for $k$-Stirling permutations \label{SEC Stirling}}
In the following we present alternative representations of the generating function of $k$-Stirling permutations. 


\subsection{Classical Stirling permutations}
In the special case $k=2$ Janson~\cite{Jan2008} showed that the class of $2$-Stirling permutations of size $n$ is in bijection with the class of plane-oriented recursive trees of size $n+1$.
A bijection between ternary increasing trees of size $n$ and plane-oriented recursive trees of size $n+1$ was given in~\cite{JanKuPa2008}. We will provide a bijection between path diagrams with \L ukasiewicz paths with unrestricted number of ``rise" steps, plane-oriented recursive trees and Stirling permutations.

\begin{theorem}
\label{THMpath2}
The class of plane-oriented recursive trees of size $n+1$ is in bijection with the system of labeled \L ukasiewicz path diagrams whose possibility function $\pos(.)$ is given by
\begin{equation*}
\pos(a^\ell_j)=j+1 \quad \text{ for } \quad 0\le \ell, 
\quad \text{ and } \quad \pos(b_j)=j+1,
\end{equation*}
and whose plane paths are comprised of $n$ steps.

\end{theorem}

\begin{proof}
The proof is similar to that of Theorem~\ref{THMpath}.
First, consider the following algorithm for constructing a plane-oriented recursive tree of size $n+1$ using the path $L=L_1\ldots L_n$, which could result in different trees depending on the choices made during each step.
We begin the construction with one placeholder for an internal node.  Then for $1\le i \le n$, at the $i$-th step:
replace any existing placeholder with the node labeled by $i$;
if $L_i=a^\ell_j$, create $\ell+1$ children for the node $i$, each labeled by a placeholder; and if $L_i=b_j$, the node $i$ remains a leaf node.  

Note that in each case, the net number of placeholders changes by the same amount as the height of the path; since the construction began with one placeholder, at the beginning of the $k$-th construction step the number of placeholders is one more than the y-coordinate where the $k$-th step of the path $L$ begins.  The number of possibilities for carrying out the $k$-th step is $j+1$ whether $L_k$ is the ``rise" step $a^\ell_j$ or the ``fall" step $b_j$, which defines the corresponding possibility function for the path diagrams.  At the end of the $n$-th step, there is one more placeholder, which we replace by a leaf node labeled by $n+1$.

To establish a unique correspondence between the path diagram $(L,p)$ and a plane-oriented recursive tree of size $n+1$, we will induce an ordering on the internal nodes: depth-first, with the child nodes ordered from left to right.  The algorithm for constructing a unique plane-oriented recursive tree $T$ of size $n+1$ using this path diagram is then as follows.  

As before, we begin with one placeholder for an internal node of the tree $T$. For steps $1\le i \le n$: 

\begin{itemize}
    \item If the $i$-th step of the path is a ``fall" step $L_i=b_j$: using depth-first order, choose the $(p_i+1)$-st available placeholder.  Replace it with a node labeled by $i$, and leave the node as a leaf.
    \item If the $i$-th step of the path is a ``rise" step $L_i=a^\ell_j$: using depth-first order, choose the $(p_i+1)$-st available placeholder.  Replace it with a node labeled by $i$ which has $\ell+1$ children, each labeled by a placeholder.
   
\end{itemize}

After $n$ steps, there will be a single available placeholder remaining.  To complete the tree, replace that placeholder with a leaf node labeled by $n+1$.

\end{proof}

Let $\mathcal{F}_n$ denote the the system of labeled \L ukasiewicz path diagrams specified in Theorem~\ref{THMpath2}, and let $f(t)=\sum_{n\ge 0}|\mathcal{F}_n| t^n$ be the ordinary generating function of these path diagrams.  Let $\mathcal{G}_n$ denote the system of labeled \L ukasiewicz path diagrams whose plane paths are restricted to the steps $a^0$, $a^1$, $a^2$, and $b$, and whose possibility function is defined by $\pos(a^0_j)=\pos(a^1_j)=3(j+1)$ and $\pos(a^2_j)=\pos(b_j)=j+1$. 
Let $g(t)=\sum_{n\ge 0}|\mathcal{G}_n| t^n$ be the ordinary generating function of $\mathcal{G}_n$.

The set $\mathcal{S}_n$ of Stirling permutations of size $n$ is in bijection with the set of path diagrams $\mathcal{G}_{n-1}$ by Theorem~\ref{THMpath}.  Also, since $\mathcal{S}_n$ is in bijection with the set of plane-oriented recursive paths of size $n+1$ (see Janson~\cite{Jan2008}), which in turn is in bijection with the set of path diagrams $\mathcal{F}_n$ by Theorem~\ref{THMpath2}, we get the identity $1 + S(t)= f(t) = 1+t g(t)$.

\begin{theorem}\label{THM2stirling}
	Let $S_n=(2n-1)!!$ denote the number of Stirling permutations of size $n$ and $S(t)=\sum_{n\ge 1}S_n t^n$ its the generating function. Then $1 + S(t)= f(t) = 1+t g(t)$, and can therefore be represented by two different continuous fraction types:
	\begin{equation}
	\begin{split}
	1+S(t)&=\frac{1}
	{1- t-
		\frac{2\cdot 1\cdot t^2 }
		{1- 2\cdot t - \frac{3\cdot 2\cdot t^2}{1- 3\cdot t -\cdots}
		}-
		\frac{3\cdot 2\cdot 1\cdot t^3}
		{\big(1-2\cdot t - \frac{3\cdot 2\cdot t^2}{1- 3\cdot t -\cdots}-\cdots \big)
			\big(1- 3\cdot t - \frac{4\cdot 3\cdot t^2}{1-4\cdot t -\cdots}-\cdots \big)
		}-
		\frac{4\cdot3\cdot 2\cdot 1\cdot t^3}{(\cdots)(\cdots)(\cdots)}
		-\cdots
	}\\
	&= 1+\frac{t}{1 - 1\cdot \binom{3}{1}t - \frac{1\cdot 2\cdot \binom{3}{2} t^2}{1-2\cdot \binom{3}{1}t -\frac{2\cdot 3\cdot \binom{3}{2} t^2}{1-3\cdot \binom{3}1 t\dots}-\frac{2\cdot 1\cdot 2\cdot 3\cdot \binom{3}{3} t^3}{1-4\cdot \binom{3}1 t\dots}} - \frac{1\cdot 2\cdot 3\cdot \binom{3}{3} t^3}{\big(1-3\cdot \binom31 t\dots\big)\big(1-2\cdot\binom{3}{1}t\dots\big)}}\\
	\end{split}
	\end{equation}
	
\end{theorem}

\begin{proof}
	The set $\mathcal{S}_n$ of Stirling permutations of size $n$ is in bijection with the set of path diagrams $\mathcal{G}_{n-1}$ by Theorem~\ref{THMpath}.  Also, since $\mathcal{S}_n$ is in bijection with the set of plane-oriented recursive paths of size $n+1$ (see Janson~\cite{Jan2008}), which in turn is in bijection with the set of path diagrams $\mathcal{F}_n$ by Theorem~\ref{THMpath2}, we get the identity $1 + S(t)= f(t) = 1+t g(t)$.

The first of the branched continued fractions is known as a \L ukasiewicz fraction; its form is obtained through the characteristic polynomial of \L ukasiewicz paths in a manner similar to that in section~\ref{SEC ogf local types}, taking into account that there is no restriction on the type of ``rise" steps (for more on \L ukasiewicz paths, see the works of Roblet~\cite{Roblet1994} and Viennot~\cite{Viennot1983}, and also~\cite{Varvak2008}).
\end{proof}

\subsection{Equivalent statistics on plane-oriented recursive trees, ternary trees, and Stirling permutations}

Let $X_{n,j}$ denote the number of nodes of outdegree $j$ in a random plane-oriented recursive tree of size $n$.
We relate the distribution of outdegrees to suitably defined statistics in ternary increasing trees
and Stirling permutations. Any ternary increasing tree can be decomposed by deleting all center edges into trees having only left or right edges.
Let $X^{[LR]}_{n,j}$ denote the number of size $j$ left-right trees in a random ternary increasing tree of size $n$.

\smallskip

Concerning Stirling permutations $\sigma=\sigma_1\dots\sigma_{2n}$, we introduce block structures as follows.
A block in a Stirling permutation $\sigma$ is a substring
$\sigma_p\dotsm \sigma_q$ with $\sigma_p=\sigma_q$ that is maximal, i.e.~not contained in any larger
such substring~\cite{JanKuPa2008}. There is at most one block for every $i=1,\dots,n$,
extending from the first occurrence of $i$ to the last; we say that
$i$ forms a block when this substring
is not contained in a string $\ell\dotsm \ell$ for some $\ell<i$. The decomposition $\sigma=[B_1][B_2]\dots[B_j]$ is a block structure of $\sigma$. Removing from each of the blocks the leftmost and the rightmost number, we are left with possibly empty substrings, which after an order-preserving relabeling form (sub-)Stirling permutations. We recursively determine the block structure in these (sub)-Stirling permutations.
The Stirling permutation $\sigma$ has a block structure of size $j$ if either $\sigma$ or any of the recursively obtained (sub)-Stirling permutations decompose into $j$ blocks.
Let $X^{[B]}_{n,j}$ denote the number of block structures of size $j$ in a random Stirling permutation of size $n$.

\begin{example}
The Stirling permutation $\sigma=221553367788614499$ of size nine
has block decomposition $\sigma=[22][155336778861][44][99]$ of size 4. After
removal of the leftmost and the rightmost entries in the blocks, the only
non-empty (sub-)Stirling permutation is given by
$5533677886$. After an order-preserving relabeling we get
$\sigma'=2211344553$. We have $\sigma'=[22][11][344553]$, a block decomposition of size 3;
consequently we obtain the (sub-)Stirling permutation
$\sigma''=1122$, which has block decomposition $\sigma''=[11][22]$ of size 2.
Hence, $X^{[B]}_{9,4}(\sigma)=1$, $X^{[B]}_{9,3}(\sigma)=1$ and
$X^{[B]}_{9,2}(\sigma)=1$.
\end{example}

\begin{theorem}
For $j>2$, the distribution of the number of nodes $X_{n+1,j}$ of outdegree $j$ in a random plane-oriented recursive tree of size $n+1$ coincides with the distribution of  the number $X^{[LR]}_{n,j-1}$ of size $j-1$ left-right trees in a random ternary increasing tree of size $n$, and
with the distribution of the number $X^{[B]}_{n,j}$ of sub-Stirling permutations with number of blocks equal to $j$ in a random Stirling permutation of size $n$.


Moreover, the nodes of outdegree two in plane-oriented recursive tree of size $n+1$ correspond to
the number of nodes in ternary increasing trees of size $n$ which have exactly one child connected by a center edge,
such that this child is itself a leaf node.
\end{theorem}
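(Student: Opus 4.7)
The plan is to prove both identities by chaining the bijections from \refT{STIRPprop1T} (Gessel, $k=2$) and \refT{STIRPprop2T} (Janson) and using the recursive block decomposition of the Stirling permutation as a common pivot. The intermediate statistic $X^{[B]}_{n,j}$ is well suited for this because it translates to plane-oriented trees through Janson's contour encoding and to ternary trees through Gessel's recursive splitting given in Bijection~\ref{TYPESbij}.

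For the equality $X_{n+1,j}(T)=X^{[B]}_{n,j}(\sigma)$ I would work on the plane-oriented side. Under Janson's bijection the top-level blocks $[B_1][B_2]\dotsm[B_k]$ of $\sigma$ are precisely the subtrees hanging off the root of $T$, so the root's outdegree equals the number of top-level blocks. Stripping $[B_i]=\ell_i B'_i\ell_i$ and relabelling yields the Stirling permutation encoding the $i$-th subtree; iterating, every internal node $v\in T$ appears exactly once in the recursive stripping procedure as a sub-Stirling permutation whose block count equals $\deg^+(v)$, and summing over outdegree-$j$ nodes gives the claim.

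For the equality $X^{[B]}_{n,j}(\sigma)=X^{[LR]}_{n,j-1}(T')$ with $j>2$ I would analyse the decomposition $\sigma=\sigma_1\cdot 1\cdot\sigma_2\cdot 1\cdot\sigma_3$ from Bijection~\ref{TYPESbij}. Because every entry of $\sigma_1$ and $\sigma_3$ is strictly greater than $1$, their top-level blocks remain top-level blocks of $\sigma$, while the whole piece $[1\sigma_2 1]$ forms one additional block. This yields the key recursion
\begin{equation*}
b(\sigma)=b(\sigma_1)+1+b(\sigma_3),
\end{equation*}
saying that the block count propagates additively along left and right edges but is reset upon crossing a centre edge. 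Unrolling this recursion inside the left-right subtree $L$ of a centre-entered ternary node $v$, one sees that the block count of the sub-Stirling permutation produced at $v$ depends only on $|L|$, and a short induction fixes the precise shift needed to match $X^{[B]}_{n,j}=X^{[LR]}_{n,j-1}$ when $j>2$.

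The main obstacle is the boundary case $j=2$ and the accompanying \emph{moreover} statement. For $j=2$ the recursion degenerates, as $b(\sigma_1)+1+b(\sigma_3)=2$ forces $b(\sigma_1)+b(\sigma_3)=1$, so one side must be empty and the other must consist of a single trivial block $[\ell\ell]$. Tracing this rigid configuration back through Bijection~\ref{TYPESbij} identifies it with a ternary node whose unique child is attached by a centre edge and is itself a leaf of the ternary tree, yielding the \emph{moreover} statement. Carrying out this identification carefully, and verifying that the correspondence neither misses nor double-counts any outdegree-$2$ plane node, is the most delicate part of the proof.
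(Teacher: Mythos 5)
Your overall route---pivoting both bijections through the recursive block decomposition of the Stirling permutation---is the natural one (the paper itself omits the proof, referring only to the bijection of \cite{JanKuPa2008}), and your treatment of the first equality $X_{n+1,j}=X^{[B]}_{n,j}$ is correct. The problem is in the second half. Your key recursion $b(\sigma)=b(\sigma_1)+1+b(\sigma_3)$ is right, but unrolling it gives exactly $b(\sigma)=|L|$, the size of the left--right tree containing the root of the ternary tree: each node of that left--right tree contributes precisely one top-level block, namely the block delimited by its own two occurrences, and the sub-permutation stripped from that block is the code of the node's center subtree. There is no shift left to be ``fixed by a short induction''; the recursion yields $X^{[B]}_{n,j}=X^{[LR]}_{n,j}$, not $X^{[LR]}_{n,j-1}$. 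The paper's own worked example confirms the unshifted version: for $\sigma=221553367788614499$ the left--right trees have sizes $4,3,2$ and the block counts are $4,3,2$. So either the index $j-1$ in the statement is a misprint (which the example suggests) or a different plane-tree/ternary-tree bijection is intended; in either case your argument as written cannot produce the stated $j-1$, and asserting that an unspecified induction will supply the missing unit is a genuine gap rather than a routine verification.

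The $j=2$ analysis is also wrong. From $b(\sigma_1)+b(\sigma_3)=1$ you conclude that the nonempty side is a single \emph{trivial} block $[\ell\ell]$, but that block is $[\ell\tau\ell]$ where $\tau$ encodes the center subtree of $\ell$, and nothing forces $\tau$ to be empty. The configuration you actually isolate is a node $v$ with exactly one left-or-right child $w$ such that $w$ has no left or right children (a left--right tree of size two, with $w$ free to carry a center subtree); this is not ``a node whose unique child is attached by a centre edge and is itself a leaf''. Indeed, in the example the unique outdegree-$2$ node of the plane tree corresponds to the left--right tree $\{7,8\}$ (joined by a right edge), while no node of that ternary tree has a single center child which is a leaf, so your reading of the \emph{moreover} clause would give $X_{10,2}=0$ against the example's value $1$. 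The honest conclusion is that the statement, the example, and your sketch cannot all be reconciled; a correct write-up has to either prove the index-matched identity $X_{n+1,j}=X^{[LR]}_{n,j}=X^{[B]}_{n,j}$ that your recursion actually delivers, or exhibit explicitly the (different) bijection for which the $j-1$ and the \emph{moreover} clause hold---neither of which your proposal does.
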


The proof of the the result consists of a simple application of the bijection stated in~\cite{JanKuPa2008}, and is therefore omitted.

\begin{figure}[!htb]
\centering
\includegraphics[angle=0,scale=0.9]{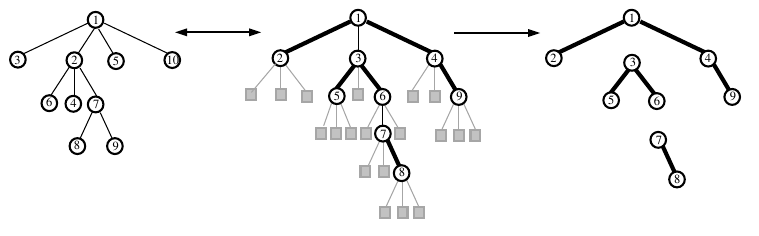}
\caption{A plane-oriented increasing tree of size 10, the corresponding size 9 ternary increasing trees together with its left-right tree decomposition.\label{fig2}}
\end{figure}

\begin{example}
The Stirling permutation $\sigma$ of size nine corresponding to the trees in \refF{fig2}, obtained either using the bijection
with plane-oriented recursive tree \cite{Jan2008} or with ternary increasing trees \cite{JanKuPa2008},
is given by $\sigma=221553367788614499$. As observed before we have $X^{[B]}_{9,4}(\sigma)=1$, $X^{[B]}_{9,3}(\sigma)=1$ and $X^{[B]}_{9,2}(\sigma)=1$, corresponding to the number of nodes with outdegrees given by four, three and two in the corresponding plane-oriented recursive trees, and
with the sizes of the left-right trees in ternary increasing trees.
\end{example}

\subsection{General case of $k$-Striling permutations and other tree families} 
In~\cite{JanKuPa2008}, Janson et al. presented a different tree family in bijection with $(k+1)$-Stirling permutations, and presented a more general case of $k$-Stirling permutations. We summarize the main correspondences and re-express some of the results by using systems of labeled \L ukasiewicz path diagrams. Refer to~\cite{JanKuPa2008} and references therein for more detailed definitions of the different tree families.

\medskip

The increasing tree family $\cB_n(k)$ of $k$-bundled increasing trees of order $n$ are increasing plane trees where each node has an additional $k-1$ separation walls, which can be regarded as a type of special edges (half-edges) without child nodes. The case $k=1$ corresponds to the ordinary plane-oriented recursive trees. 

This family's degree-weight generating function
$\varphi(t)=\frac{1}{(1-t)^{k}}$. Consequently, by solving~\eqref{eqnz1} one gets the generating function $T(z)$ and the total weight $T_n$:
\begin{equation*}
T(z)=1-(1-(k+1)z)^{\frac1{k+1}}-1 ,\qquad T_{n}= \prod_{l=1}^{n-1}(l(k+1)-1).
\end{equation*}

The result of~\cite{JanKuPa2008} says that
\[
\Seq(\cB_n(k))\cong \mathcal{Q}_n(k+1).
\]
By definition, the exponential generating function of $\Seq(\cB_n(k))$s
is given by 
\[
F(z)=\frac{1}{1-T(z)}=\frac{1}{(1-(k+1)z)^{\frac1{k+1}}}
\]
and coincides with the generating function of $(k+2)$-ary increasing trees (except for the initial value $F(0)=1$), see~\eqref{eqnkary} and thus with $(k+1)$-Stirling permutations.

\begin{theorem}[\cite{JanKuPa2008}]
The family of $\Seq(\cB_n(k))$ of sequences of $k$-bundled increasing trees is in bijection with $(k+2)$-ary increasing trees and thus with $(k+1)$-Stirling permutations. 
\end{theorem}

\smallskip

In the following we will realize $\Seq(\cB_n(k))$ as a family $\mathcal{U}=\mathcal{U}(k)$ of non-standard increasing trees,  where the root node has a different degree-weight generating function compared to the rest of nodes:
The family $\mathcal{U}_{k}$ is specified by
\[
\mathcal{U}= \bigcirc\hspace*{-0.75em}\text{\small{$1$}}\hspace*{0.3em} \times \vartheta(\mathcal{T}),
\quad \mathcal{T}= \bigcirc\hspace*{-0.75em}\text{\small{$1$}}\hspace*{0.3em} \times \varphi(\mathcal{T}),
\]
with $\varphi(t)=\frac{1}{(1-t)^k}$ and $\vartheta(t)=\frac1{1-t}$. 
Hence, we obtain
\[
U'(z)=F(z)=\frac{1}{(1-(k+1)z)^{\frac1{k+1}}}.
\]
Let $U_n=|\mathcal{U}_{n}(k)|$. Thus, $U_{n+1}=Q_n$ and the bijection between $\Seq(\cB_n(k))$ and $\mathcal{Q}_n(k+1)$
translates into a bijection $\mathcal{U}_{n+1}(k)\cong \mathcal{Q}_n(k+1)$, generalizing the bijection between ordinary plane-oriented recursive trees and Stirling permutations~\cite{Jan2008,JanKuPa2008} (case $k=1$).

\begin{theorem}
The family of $\Seq(\cB(k))$ of sequences of $k$-bundled increasing trees 
can be realized as non-standard increasing trees $\mathcal{U}(k)$ with two degree-weight generating functions:
$\varphi(t)=\frac{1}{(1-t)^k}$ for non-root nodes and $\vartheta(t)=\frac1{1-t}$ for the root. 
\end{theorem}

A straightforward extension of our previous description of plane-oriented recursive trees, case $k=1$, gives the following result.

\begin{theorem}
\label{THMpath3}
The class of $\mathcal{U}_{n+1}(k)$ of increasing trees is in bijection with the system of labeled \L ukasiewicz path diagrams whose plane paths are comprised of $n$ steps, with possibility function $\pos(.)$ given as follows: 

\begin{equation*}
\begin{split}
\pos(a^\ell_0)&=1, \\
\pos(a^\ell_j)&=\binom{\ell+k}{k-1}\cdot(j+1) \quad \text{ for } \quad j>0, \\
\pos(b_0)&=j+1.
\end{split}    
\end{equation*}

\end{theorem}

\smallskip 

Another tree family $\mathcal{V}=\mathcal{V}(k)$ in bijection with $(k+1)$-Stirling permutations can be obtained as follows: we combine a root node, weighted according to a $(k+2)$-bundled increasing tree, with subtrees weighted according to $k$-bundled increasing trees. 
\[
\mathcal{V}= \bigcirc\hspace*{-0.75em}\text{\small{$1$}}\hspace*{0.3em} \times \vartheta(\mathcal{T}),
\quad \mathcal{T}= \bigcirc\hspace*{-0.75em}\text{\small{$1$}}\hspace*{0.3em} \times \varphi(\mathcal{T}),
\]
with $\varphi(t)=\frac{1}{(1-t)^k}$ and $\vartheta(t)=\frac1{(1-t)^{k+2}}$. 
Thus, 
\[
V'(z)=\vartheta(T(z))=\frac{1}{(1-(k+1)z)^{1+\frac1{k+1}}},\quad 
V(z)=\frac{1}{(1-(k+1)z)^{\frac1{k+1}}}-1.
\]
Let $\mathcal{V}_{n}(k)$ denote the family of trees of $\mathcal{V}(k)$ with $n$ nodes. By extraction of coefficients we observe that  $V_n(k)=|\mathcal{V}_{n}(k)|=Q_n(k+1)$. We note that a bijection between $\mathcal{V}_n(k)$ and $\mathcal{Q}_n(k+1)$ can be obtained similarly to Janson's original bijection~\cite{Jan2008}, proceeding via a depth-first walk, also taking into account the separation walls of the bundled trees. See also the work of Janson et al.~\cite{JanKuPa2008} for a closely related bijection between $k$-bundled increasing trees and so-called $k$-bundled Stirling permutations. We summarize our findings.

\begin{theorem}
The family of $\mathcal{V}(k)$ of non-standard increasing trees can be constructed with two degree-weight generating functions:
$\varphi(t)=\frac{1}{(1-t)^{k+2}}$ for non-root nodes and $\vartheta(t)=\frac1{(1-t)^k}$ for the root. $\mathcal{V}(k)$ is in bijection with $(k+1)$-Stirling permutations and also with $(k+2)$-ary increasing trees.
\end{theorem}

\begin{proof}
We label each separation wall of a node labeled $v$ by the label of the node $v$. 
Additionally, we label any ordinary edge by the label of the child. 
Any non-root node has at least $k-1$ outgoing edges, thinking
of the separation walls as a special type of edges. Moreover, it has one incoming edge from its ancestor.   
The root has by definition $k+1$ separation walls. Now we perform the depth-first walk and code the tree by the sequence of the labels visited on the edges, under the additional rule that a label on a separation wall only contributes once. 
Since every proper edge is traversed twice, and every label except 1
occurs on exactly one proper edge, every integer appears exactly $k+1$ times and the code is 
a permutation of the multiset $\{1^{k+1}, 2^{k+2},\dots, n^{k+2}\}$. 
By construction, the elements occurring between the two
occurrences of $i$ are larger than $i$, since we can only visit
nodes with higher labels.
\end{proof}

\begin{theorem}
\label{THMpath4}
The class of $\mathcal{V}_{n+1}(k)$ of increasing trees is in bijection with the system of labeled \L ukasiewicz path diagrams whose plane paths are comprised of $n$ steps, with possibility function $\pos(.)$ given as follows:

\begin{equation*}
\begin{split}
\pos(a^\ell_0)&=\binom{\ell+k+2}{k+1}, \\
\pos(a^\ell_j)&=\binom{\ell+k}{k-1}\cdot(j+1) \quad \text{ for } \quad j>0, \\
\pos(b_j)&=j+1.
\end{split}
\end{equation*}

\end{theorem}

\begin{proof}
As the construction is similar to our constructions given before, we will be more brief. Given a path diagram $(L,p)$, we begin with one placeholder for an internal node of the tree $T$. Concerning the root and the first step of the path, it has to be a ``rise" step $L_i=a^\ell_0$: we enter the label one at the root and distribute $\ell$ placeholders into the $k+2$ different positions (induced by the $k+2$ separation walls stemming from the root) according to the value of $p_1$ and a lexicographical ordering. 

\smallskip

For the remaining steps $2\le i \le n$ we proceed as follows: 

\begin{itemize}
    \item If the $i$-th step of the path is a ``fall" step $L_i=b_j$: using depth-first order, choose the $(p_i+1)$-st available placeholder. Replace it with a node labeled by $i$, itself having no child nodes.
    \item If the $i$-th step of the path is a ``rise" step $L_i=a^\ell_j$: Let $s$ be the number of integer times $\binom{\ell+k}{k-1}$ goes into $p_i$, and let $t$ be the remainder of $p_i$ modulo $\binom{\ell+k}{k-1}$, such that $p_i=s\cdot \binom{\ell+k}{k-1}+t$. Using depth-first order, choose the $(s+1)$-st available placeholder and enter a node labelled $i$ with $k$ 
    separation walls. Distribute $\ell$ placeholders into the $k$ different positions (induced by the $k$ separation walls) according to the value of $t$ and a lexicographical ordering. 
\end{itemize}

After $n$ steps, there will be a single available placeholder remaining.  To complete the tree, replace that placeholder with a node labeled by $n+1$ (with no child nodes).

\end{proof}

A direct consequence of Theorems~\ref{THMpath}, \ref{THMpath3} and~\ref{THMpath4} is the following observation. 

\begin{coroll}
There exist at least three different branched continued fractions expansions of the generating function
of $k$-Stirling permutations $\sum_{k\ge 0}Q_n(k)t^n= \sum_{n\ge 0}k^{n}\frac{\Gamma(n+1+\frac1k)}{\Gamma(\frac1k)}t^n$ due to three different tree families and their path diagram representations.
\end{coroll}

\begin{remark}
We finish by giving a symbolic overview of the involved bijections between $k$-Stirling permutations and increasing tree families.
\begin{equation*}
 \mathcal{Q}_n(k) \cong 
 \begin{cases} 
 \mathcal{T}_n(k+1),\\
 \mathcal{U}_{n+1}(k-1)\cong \Seq(\mathcal{B})_n(k-1),\\
 \mathcal{V}_n(k-1).
 \end{cases}
\end{equation*}
Our contributions in this work are the bijections to the family $\mathcal{U}$ and $\mathcal{V}$.
Note that for $k=1$ the second and third families are not defined. Moreover, for $k=2$ the families
$\mathcal{U}_{n+1}(1)\cong \Seq(\mathcal{B})_n(1)\cong \mathcal{P}_{n+1}$ are simply ordinary plane-oriented recursive trees.
\end{remark}

\end{document}